\date{}
\theoremstyle{plain}
      \newtheorem{theorem}{Theorem}[section]
      \newtheorem{lemma}[theorem]{Lemma}
      \newtheorem{claim}[theorem]{Claim}
      \newtheorem{proposition}[theorem]{Proposition}
      \newtheorem{conjecture}[theorem]{Conjecture}
\theoremstyle{definition}
\theoremstyle{remark}
\def\cn{\mbox{\rm cr}}
\title{Immersions and Albertson's conjecture }
\author{Jacob Fox\thanks{Stanford University, Stanford, CA. Supported by NSF awards DMS-2452737 and DMS-2154129. Email: {\tt jacobfox@stanford.edu.}} \and J\'anos Pach\thanks{Supported by NKFIH grant K-131529 and ERC Advanced Grant 882971 ``GeoScape.'' Email:
{\tt pach@cims.nyu.edu}.}\and  Andrew Suk\thanks{Department of Mathematics, University of California at San Diego, La Jolla, CA, 92093 USA. Supported by NSF grant DMS-2246847. Email: {\tt asuk@ucsd.edu}.} }
\begin{document}

\maketitle

\begin{abstract}

A graph is said to contain $K_k$ (a clique of size $k$) as a \emph{weak immersion} if it has $k$ vertices, pairwise connected by edge-disjoint paths. In 1989, Lescure and Meyniel made the following conjecture related to Hadwiger's conjecture: Every graph of chromatic number $k$ contains $K_k$ as a weak immersion. We prove this conjecture for graphs with at most $(1.64-o(1))k$ vertices. As an application, we make some progress on Albertson's conjecture, according to which every graph $G$ with chromatic number $k$ satisfies $\cn(G) \geq \cn(K_k)$. In particular, we show that the conjecture is true for all graphs of chromatic number $k$, provided that they have at most $(1.64-o(1))k$ vertices.  
\end{abstract}

\section{Introduction}

There are several famous problems in graph theory which state that over all graphs of a given chromatic number, some graph parameter is minimized by a complete graph. 
Obviously, the chromatic number of a graph is at least its clique number. The converse is false, but partial converses have been of central interest in graph theory. Hadwiger's conjecture states that every graph of chromatic number $k$ contains a $K_k$-minor. Wagner proved in 1937 that the case $k=5$ is equivalent to the four color theorem. Hadwiger's conjecture was verified for $k \leq 6$ by Robertson, Seymour, and Thomas \cite{RST}, and is open for $k\geq 7$. In the 1980's, Kostochka \cite{K84} and Thomason \cite{T84} proved that every graph of chromatic number $k$ contains a $K_t$-minor, where $t = \Omega(k/\sqrt{\log k})$,  which was improved to $t = \Omega(k/(\log k)^{1/4 + \epsilon})$ by Norin, Postle, and Song \cite{NPS}, and very recently, to $t = \Omega(k/\log\log k)$ by Delcourt and Postle \cite{DP}.   

In 1961, Haj\'os conjectured the following strengthening of Hadwiger's conjecture: Every graph of chromatic number $k$ contains a \emph{subdivision} of the complete graph $K_k$, i.e., it has $k$ so-called ``branch vertices'' connected by $\binom{k}{2}$ internally \emph{vertex-disjoint} paths.
Haj\'os' conjecture is true for $k \leq 4$, but for $k\ge 7$ it was disproved by Catlin \cite{Cat}. In fact, Erd\H{o}s and Fajtlowicz \cite{EF81} showed that almost all graphs are counterexamples (see also \cite{FLS}). The conjecture remains open for $k = 5,6$.  

Lescure and Meyniel \cite{LM} suggested a conjecture weaker than Haj\'os', which may still be true for every $k$. Instead of requiring that $G$ contains a subdivision of $K_k$, they wanted to prove the existence of $k$ branch vertices connected by ${k\choose 2}$ \emph{edge-disjoint} paths. Moreover, these paths may pass through some branch vertices other than their endpoints.  
They called such a subgraph of $G$ a \emph{weak immersion} of $K_k$.

 More precisely, a graph $G$ contains $H$ as a {weak immersion} if there is a mapping  $\phi$ from $V(H)\cup E(H)$, which maps each vertex of $H$ to a vertex in $G$ and each edge of $H$ to a path in $G$ such that

\begin{enumerate}

\item $\phi(u)\neq \phi(v)$, for distinct vertices $u,v \in V(H)$;

\item for distinct edges $e,f \in E(H)$, the paths $\phi(e)$ and $\phi(f)$ are edge-disjoint; and

\item for each edge $e=uv \in E(H)$, $\phi(e)$ is a path in $G$ with endpoints $\phi(u)$ and $\phi(v)$.
\end{enumerate}
\noindent If the following condition is also satisfied, we say that $G$ contains $H$ as a \emph{strong immersion}.

\begin{enumerate}
  \setcounter{enumi}{3}
    \item For each edge $e  \in E(H)$, the path $\phi(e)$ intersects the set of branch vertices, $\phi(V(H))$, only at its endpoints.
\end{enumerate}

\noindent  In 1989, Lescure and Meyniel conjectured the following.

\begin{conjecture}[\cite{LM}]\label{c1}
Every graph with chromatic number $k$ contains a weak immersion of the complete graph $K_k$.
\end{conjecture}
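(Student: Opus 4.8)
The plan is to analyze a vertex-minimal counterexample and extract everything one can from criticality. Suppose $G$ has $\chi(G)=k$, contains no weak immersion of $K_k$, and has $|V(G)|$ minimum; then $G$ is $k$-vertex-critical, so $\delta(G)\ge k-1$ and $n:=|V(G)|\ge k$, with $G=K_k$ when $n=k$ (in which case we are done). Since a weak $K_k$-immersion in any subgraph of $G$ lifts to one in $G$, we may also take $G$ edge-critical, so $e(G)$ exceeds the trivial bound $\tfrac{(k-1)n}{2}$ by a surplus that is linear in $n$ (Kostochka--Yancey), and the degree-$(k-1)$ vertices induce a Gallai forest (Gallai). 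These facts split the problem into a \emph{dense} regime, where $n$ is comparable to $k$, and a \emph{sparse} regime, where $n\gg k$, and the two require entirely different tools.

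In the dense regime --- and this is the part one can actually push through, up to $n\le(1.64-o(1))k$ --- the bound $\delta(G)\ge k-1$ forces the complement $\overline G$ to have maximum degree at most $n-k$, so $G$ is ``almost complete.'' I would pick the branch set $S$ of $k$ vertices so that $G[S]$ is as dense as possible; then the family $M$ of pairs in $S$ that must be realized by paths forms a graph of maximum degree at most $n-k$, and any such pair $uv$ has at least $2(k-1)-(n-2)$ common neighbours and hence is joinable by a path of length $2$ (or, failing that, length $3$). The real work is packing all $|M|$ of these short paths pairwise edge-disjointly: this is a Hall/flow feasibility problem in which one accounts, vertex by vertex, for the ``spare'' edges left after reserving those used as direct clique edges. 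A degree-$(k-1)$ vertex contributes essentially no spare capacity yet lies in almost every target clique edge, while the number of routes to pack scales with $n-k$; balancing these two effects is exactly what produces a threshold of the form $n\le c\,k$ for an explicit constant $c\approx 1.64$. This bookkeeping --- plus handling the borderline cases where short routes clash globally rather than locally --- is the technical heart of the dense regime but uses no new idea.

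The genuine obstacle is the sparse regime. Here $G$ can be locally sparse ($k$-critical graphs of arbitrarily large girth exist), so almost-completeness disappears and the dense argument collapses. The only known engine is the implication ``minimum degree $\ge Ct\ \Rightarrow\ K_t$ weak immersion'' for an absolute constant $C>1$ (DeVos, Dvořák, Fox, McDonald, Mohar and Scheide, with later improvements of the constant). Feeding in $\delta(G)\ge k-1$ only extracts a $K_{k/C}$-immersion, losing a multiplicative factor, and nothing currently closes that gap. To reach $K_k$ one would need an immersion-extraction theorem that is \emph{asymptotically tight} --- ratio tending to $1$ --- at least for graphs arising from $k$-critical ones, exploiting not merely $\delta(G)\ge k-1$ but the full strength of criticality: the Kostochka--Yancey edge surplus, the Gallai-forest structure of the low-degree part, and the non-existence of a proper $(k-1)$-colouring (which excludes the extremal examples, such as balanced complete bipartite graphs, that block $C=1$ in the unrestricted setting). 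I expect this tight-immersion step to be the crux, and since no such result is known, the realistic yield of this plan is the conjecture for every graph whose $k$-critical subgraph has at most $(1.64-o(1))k$ vertices --- in particular every graph of chromatic number $k$ on at most that many vertices --- with the full conjecture resting on removing the constant-factor loss.
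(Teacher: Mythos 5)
You are right about the top-level situation: this statement is the Lescure--Meyniel conjecture, which remains open for $k\geq 7$, and the paper does not prove it either --- it proves exactly the partial result you predict is the realistic yield, namely the case $n\leq(1.64-o(1))k$ (Theorem~\ref{main1}), with the sparse regime untouched. So your assessment of what can and cannot be done is accurate, and your diagnosis of the sparse regime (minimum-degree immersion theorems lose a constant factor, and criticality beyond $\delta(G)\geq k-1$ is not currently exploitable there) matches the state of the art cited in the paper.

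However, your dense-regime sketch has a genuine gap precisely at the step you describe as ``bookkeeping using no new idea.'' First, you do not use the structural input that drives the paper's argument: Gallai's theorem (Lemma~\ref{lemGa}), which for a $k$-critical graph on $n\leq 2k-2$ vertices gives a partition $V_1\cup\cdots\cup V_t$ with complete bipartite connections between parts and $n_i\geq 2k_i-1$ inside each part. This confines the nonadjacent pairs of branch vertices to single parts and, crucially, supplies detour vertices: the paper routes a clashing pair $u,u'\in U_i$ by a length-four path $u,f_u(u'),t,f_{u'}(u),u'$ through a branch vertex $t\in U\setminus U_i$, which is automatically adjacent to both middle vertices by the decomposition. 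Your plan of routing every missing pair by a length-$2$ (or $3$) path through common neighbours has no such reservoir, and the edge-disjoint packing of length-$2$ paths alone is generally infeasible; the paper's packing argument is not a Hall/flow count but a chromatic-index bound for an auxiliary multigraph $H_i$ on $W_i$ whose colors are the branch vertices outside $V_i$. Second, the constant: Shannon's bound $\chi'(H_i)\leq 3\Delta/2$ only yields the threshold $n<1.4k-0.6$; reaching $1.64$ requires choosing $U_i$ by a semi-random high-degree/random mixture, a randomized choice of the injections $f_u$, the Vizing--Gupta bound $\chi'\leq\Delta+\mu$, and a nontrivial optimization showing $\chi'(H_i)\leq(9/16+o(1))k_i$ (Lemma~\ref{lem:fix} and Section~\ref{sec3}). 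So your claim that the balancing ``produces a threshold $\approx 1.64$ \dots but uses no new idea'' is unsubstantiated: as written, your accounting would at best recover something like the $1.4$ bound, and even that needs the Gallai decomposition plus an edge-coloring argument rather than the local spare-capacity count you propose.
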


\noindent For $k\ge 3,$ the Lescure-Meyniel conjecture is an immediate corollary of Haj\'os' conjecture. DeVos, Kawarabayashi, Mohar, and Okamura \cite{De} verified Conjecture \ref{c1}  for $4 \leq k \leq 6$. Conjecture \ref{c1} remains open for $k \geq 7$.  According to a result of Gauthier, Le, and Wollan \cite{GLW}, every graph with chromatic number $k$ contains a weak immersion of $K_t$, where $t = (k-4)/3.54$ (see also \cite{DeD,Dv} for earlier bounds).  

Our first result shows that the Lescure-Meyniel conjecture is true for graphs whose number of vertices is not much larger than its clique number.

\begin{theorem}\label{main1}
Let $G$ be a graph with chromatic number $k$ and $n$ vertices. 
\begin{itemize} 
\item[(i)] If $n<1.4k-0.6$, then $G$ contains a weak immersion of the complete graph $K_k$. 
\item[(ii)] For each $\varepsilon>0$ and $k$ sufficiently large, if $n<(1.64-\varepsilon)k$, then $G$ contains a weak immersion of the complete graph $K_k$. 
\end{itemize}
\end{theorem}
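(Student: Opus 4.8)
The plan is to reduce to a $k$-vertex-critical graph and then solve an explicit routing problem. If $H$ is a vertex-minimal subgraph of $G$ with $\chi(H)=k$, then $H$ is $k$-vertex-critical, so $\delta(H)\ge k-1$; since a weak $K_k$-immersion of $H$ is one of $G$ and $|V(H)|\le n$, we may assume $G=H$, i.e.\ $\delta(G)\ge k-1$. Put $t:=n-k$. Then every vertex has at most $t$ non-neighbours, so $\overline G$ has maximum degree at most $t$ and, colouring $\overline G$ greedily, $V(G)$ partitions into at most $t+1$ cliques; equivalently $G=K_n$ minus an edge set in which every vertex lies in at most $t$ edges. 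For $t=0$ we get $G=K_k$ and are done, so assume $0<t<(0.64-\varepsilon)k$ (resp.\ $t<0.4k-0.6$).

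Next, fix a set $S$ of $k$ branch vertices and let $W:=V(G)\setminus S$ (so $|W|=t$); the vertices of $W$ will be used only in the interiors of paths. Use the direct edge for every pair in $E(G[S])$, and route each non-edge $uv$ with $u,v\in S$ by a path of length two $u\text{--}w\text{--}v$ through some $w\in W$ (occasionally a length-three path through two vertices of $W$, or a path through a branch vertex arranged so that no edge is used twice). Since no edge incident to $W$ can serve as a direct $K_k$-edge, the immersion exists as soon as the non-edge graph $F:=\overline G[S]$ can be partitioned into parts $\{F_w:w\in W\}$ with $F_w$ a matching all of whose endpoints are neighbours of $w$ (the size bound $|F_w|\le\lfloor\deg_G(w)/2\rfloor$ is then automatic). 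This is an edge-colouring problem for $F$ in which the colour class of $w$ must avoid the set $X_w\subseteq S$ of non-neighbours of $w$; here $\Delta(F)\le t$, $|X_w|\le t$, and for each $v\in S$ the number of forbidden colours at $v$ equals the number of non-neighbours of $v$ in $W$, which together with $\deg_F(v)$ is at most $t$, so the local colour budget is exactly right.

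The quantitative heart is the simultaneous choice of $S$ and of the colouring. Choosing $W$ to consist of the $t$ vertices of largest non-degree makes $F$ sparse but gives the routers large forbidden sets; keeping those vertices in $S$ does the opposite. A short counting identity shows that the total routing capacity minus $|E(F)|$ equals $\tfrac12\big(tk+\sum_{w\in W}(\text{non-degree of }w)\big)-|E(\overline G)|$, which is nonnegative for a suitable $W$ because the $k$ smallest non-degrees sum to at most $tk$; so the obstruction, if any, is not a shortage of capacity but the feasibility of the constrained edge-colouring (a Vizing/K\H{o}nig--Hall type statement, attacked via a defect version of Hall's theorem, or a fractional relaxation together with an integrality/augmenting argument). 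Running this with the crude clique bound and length-two routing yields part (i) with the constant $1.4$; part (ii) gains the constant $1.64$ by additionally exploiting length-three paths between routers (which enlarge the effective capacity) and routing a controlled number of non-edges through branch vertices along short cycles, together with an optimisation of the $S$-versus-routers trade-off that should be close to optimal for this approach.

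I expect the main obstacle to be precisely this last step: proving that branch set and routing can always be found together, since the balancing count is essentially tight at $n\approx1.64k$, so any remaining slack has to be extracted from finer structure (the exact clique sizes, degree parities, the possibility of rerouting), and the Hall/edge-colouring argument with the per-router forbidden sets is delicate. A secondary point is the small values of $k$, for which the asymptotic estimates are too weak; these should be handled by direct arguments or by invoking the known cases $4\le k\le 6$ of Conjecture \ref{c1}.
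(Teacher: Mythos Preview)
Your proposal is a plan, not a proof, and you explicitly flag its main gap yourself: you never actually establish that the constrained edge-colouring of $F=\overline G[S]$ by the routers in $W$ can be carried out. The local budget identity you write down (for each $v\in S$, forbidden colours plus $\deg_F(v)\le t$) is only a necessary condition; it does not imply the list-edge-colouring exists, and no standard Vizing/Hall variant gives it under these hypotheses. Nothing in your sketch explains why the threshold should be $1.4$ (let alone $1.64$): with only length-two routing you need to properly edge-colour a graph of maximum degree up to $t$ using exactly $t$ colours, with additional per-colour forbidden sets of size up to $t$, and there is no reason this should succeed in general. Your suggested fixes for (ii) --- length-three paths, rerouting through branch vertices, ``optimising the trade-off'' --- are not arguments.

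What you are missing is the structural input that makes the routing problem tractable. The paper's proof does \emph{not} try to colour $\overline G[S]$ globally. Instead it invokes Gallai's theorem: a $k$-critical graph on $n\le 2k-2$ vertices decomposes as $V=V_1\cup\cdots\cup V_t$ with all cross-edges present and each $G[V_i]$ being $k_i$-critical on $n_i\ge 2k_i-1$ vertices. One takes $U_i\subset V_i$ with $|U_i|=k_i$ and branch set $U=\bigcup U_i$; every non-edge inside $U$ then lies inside a single $U_i$. For each such non-edge $uu'$ one picks neighbours $f_u(u'),f_{u'}(u)\in W_i=V_i\setminus U_i$ via injective maps $f_u$, routing by a length-two path when these coincide and otherwise by the length-four path $u,\,f_u(u'),\,z,\,f_{u'}(u),\,u'$ with $z\in U\setminus U_i$ (which is possible since parts are complete to each other). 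The only constraint is that the auxiliary multigraph $H_i$ on $W_i$ (one edge per remaining non-edge) be properly edge-coloured with the $k-k_i$ colours in $U\setminus U_i$. Since $\Delta(H_i)\le k_i$, Shannon gives $\chi'(H_i)\le 3k_i/2$, and $n_i\ge 2k_i-1$ forces $k_i\le n-k+1$; the inequality $3k_i/2\le k-k_i$ then holds exactly when $n<1.4k-0.6$, which is where the constant comes from. Part~(ii) replaces Shannon by a careful (semi-random) choice of $U_i$ and the $f_u$ yielding $\chi'(H_i)\le(9/16+o(1))k_i$, whence $(25/16)k_i\le k$ and the constant $1.64$. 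None of this structure is visible in your approach; without Gallai's decomposition and the length-four detour through \emph{other} blocks, the colouring problem you set up is genuinely harder and your argument does not close.
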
 

Note that the first part of Theorem \ref{main1} holds for all $k$, while the second part of Theorem \ref{main1} is better for sufficiently large $k$. The proof of the first part of Theorem \ref{main1} is much simpler and also illustrative of the general approach. 

%\janos{I think it would be better to have a two-part theorem: part (i) would be the old one, true for all k, and part (ii) the new one. We would spell out the proof of part (i), as in the earlier version, and then switch to the modifications required for part (ii).}

%With a much simpler proof using the same framework, we show that every graph with chromatic number $k$ and with at most $1.4(k-1)$ vertices contains $K_k$ as a weak  immersion. 

As an application, we use Theorem \ref{main1} to obtain new bounds on an old conjecture of Albertson.

The \emph{crossing number} of a graph $G$, $\cn(G)$, is the smallest number of edge crossings in any drawing of $G$ in the plane.  In 2007, Albertson conjectured the following.

\begin{conjecture}\label{c2}
    Every graph $G$ with chromatic number $k$ satisfies $\cn(G) \geq \cn(K_k)$.  
\end{conjecture}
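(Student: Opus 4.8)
Since the full Conjecture~\ref{c2} is open, the natural target is the partial result announced in the abstract: $\cn(G)\ge\cn(K_k)$ whenever $\chi(G)=k$ and $G$ has $n\le(1.64-o(1))k$ vertices. The plan is to deduce this from Theorem~\ref{main1}. That theorem gives a weak immersion $\phi$ of $K_k$ in $G$, i.e.\ $k$ branch vertices $\phi(1),\dots,\phi(k)$ and $\binom{k}{2}$ pairwise edge-disjoint paths $\phi(ij)$. Fix an arbitrary drawing $D$ of $G$; it suffices to build from $D$ a drawing $D'$ of $K_k$ with $\cn(D')\le\cn(D)$, since then $\cn(K_k)\le\cn(D')\le\cn(D)$ and choosing $D$ optimal finishes the proof. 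The obvious candidate places vertex $i$ of $K_k$ at $\phi(i)$ and draws the edge $ij$ along the curve that $D$ assigns to the path $\phi(ij)$. A crossing of $D'$ between two such curves that comes from an honest crossing of $D$ lies on two specific edges of $G$, hence is charged to at most one pair of $K_k$-edges, so these crossings contribute at most $\cn(D)$ in total.

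The two ways the candidate can fail to be a genuine drawing of $K_k$ are the obstacles: (i) a path $\phi(ij)$ might pass through a branch vertex $\phi(\ell)$ with $\ell\notin\{i,j\}$ (impossible for a strong immersion); and (ii) two paths $\phi(ij),\phi(\ell m)$ on four distinct branch vertices might pass through a common non-branch vertex $w$, and if their two edge-pairs interleave in the rotation of $D$ at $w$, then any local separation of the curves forces a new crossing. That these defects are fatal without an extra hypothesis is witnessed by the octahedron: it is planar yet contains $K_5$ as a strong immersion (its eight edges among five well-chosen vertices serve as length-one paths, and the remaining two edges of $K_5$ are realized by length-two paths through the sixth vertex), and in its unique planar embedding those two paths interleave at that vertex. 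So no purely immersion-based argument can work, and one must exploit $\chi(G)=k$, which forces $G$ to contain a $k$-critical subgraph of minimum degree $\ge k-1$ and hence to be dense.

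The heart of the matter is therefore to choose the weak immersion $\phi$ \emph{after} inspecting $D$ so that both defects vanish: $\phi$ should be a strong immersion, and at every non-branch vertex $w$ the edge-pairs of the paths of $\phi$ passing through $w$ should form a non-crossing (laminar) family of chords in the rotation of $D$ at $w$. Given such a $\phi$, one pushes each path slightly off each of its interior vertices inside a small disk --- routing the non-crossing chords there without any new crossing --- and off nothing at the branch vertices (strongness), obtaining a drawing $D'$ of $K_k$ whose only crossings are crossings of $D$, so that $\cn(D')\le\cn(D)$. I would produce such a $\phi$ either by re-running the proof of Theorem~\ref{main1} with the rotation system of $D$ fed in as an additional constraint --- the gap between $1.64k$ and the trivial threshold is exactly the slack that should let the path-routing tolerate a bounded local constraint at each of the $\approx 0.64k$ non-branch vertices --- or by taking a weak immersion of minimum total length and showing, via shortcutting and local exchanges that strictly decrease the total length, that such an immersion is automatically strong and laminar at every vertex.

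The step I expect to be the main obstacle is precisely this strengthening of Theorem~\ref{main1}: producing, for an adversarially chosen drawing $D$, a weak immersion of $K_k$ that uses only the $\approx 0.64k$ non-branch vertices, never routes a path through a branch vertex, and is laminar at every non-branch vertex with respect to $D$. Keeping the $\binom{k}{2}$ paths edge-disjoint already forces them to reuse the scarce non-branch vertices heavily, so imposing the per-vertex laminarity constraint without sacrificing the $1.64$ factor should require a robust version of whatever path-packing lemma underlies Theorem~\ref{main1}.
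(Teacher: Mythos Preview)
Your diagnosis of the two obstacles is accurate, and the octahedron is exactly the right cautionary example. But the proposal stops at a wish: you would need a \emph{strong} immersion of $K_k$ that is moreover laminar at every non-branch vertex with respect to an \emph{adversarial} drawing $D$, and you give no argument that such an immersion exists. The slack you allude to in Theorem~\ref{main1} was spent bounding the chromatic index of an auxiliary multigraph (Lemma~\ref{vizing}); there is no evident remaining slack to absorb a per-vertex planarity constraint that depends on $D$, and the proof of Theorem~\ref{main1} never sees $D$ at all. Your alternative --- take a minimum-length weak immersion and argue by local exchanges that it is automatically strong and laminar --- is also unsupported: shortcutting a path through a branch vertex preserves edge-disjointness only if the shortcut edges are not already used by other paths, and laminarity at a vertex is a property of the rotation in $D$, not of the abstract graph, so a purely combinatorial length-minimization cannot force it.

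The paper takes a completely different route and never tries to manufacture a lossless drawing of $K_k$. It accepts the crude conversion: draw each $K_k$-edge along its immersion path and detour around intermediate vertices. A short count (Lemma~\ref{asympt}) shows this costs at most $k^3/2$ extra crossings, giving only $\cn(G) > \cn(K_k) - k^3/2$. Since $\cn(K_k)$ is of order $k^4$, this deficit is a lower-order term. The exact inequality is then recovered by induction on $n-k$, in fact proving the stronger statement $\cn(G) \geq \cn(K_k) + c(n-k)k^3$ for a tiny absolute $c>0$. The induction exploits the Gallai decomposition $V(G)=V_1\cup\cdots\cup V_t$ (Lemma~\ref{lemGa}) and splits into three cases on the part sizes: a small non-singleton part is completed to a clique at cost controlled by Lemma~\ref{addedge} and one inducts on a graph with larger $k$; a part with $k_i$ a fixed fraction of $k$ leaves many edges inside $W_i$ untouched by the immersion, and their crossings (via the crossing lemma, Lemma~\ref{ackcr}) overwhelm the $k^3/2$ deficit; otherwise the singleton parts form a clique on almost $k$ vertices and the complete bipartite graph between it and the rest contributes $\Omega(k^4)$ crossings by Lemma~\ref{eq1234abc}. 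The point is that the $O(k^3)$ error from the naive immersion-to-drawing step is paid off by structural side-income, so no delicate drawing-adapted immersion is needed.
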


Clearly, Albertson's conjecture is weaker than Haj\'os' conjecture.  Conjecture \ref{c2} vacuously holds for $k \leq 4$, since $\cn(K_4) = 0$. For $k = 5$, it is equivalent to the four color theorem. Following a sequence of results \cite{ACF,BT,A}, Albertson's conjecture has been confirmed for $k \leq 18$, but the problem is open for $k\geq 19$.  

A graph $G$ is said to be \emph{$k$-critical} if $\chi(G) = k$, and every proper subgraph of $G$ has chromatic number less than $k$. A $1$-critical graph is just a graph consisting of a single vertex. As $\cn(G)\geq \cn(H)$ holds for all subgraphs $H\subset G$, it suffices to prove Albertson's conjecture for $k$-critical graphs.  In \cite{BT}, Bar\'at and T\'oth verified Conjecture \ref{c2} for all $k$-critical graphs on at most $k + 4$ vertices, and Ackerman~\cite{A} proved the conjecture for all $k$-critical graphs with at least $3.03k$ vertices.  Our next result is the following.

\begin{theorem}\label{main2}  For every $\varepsilon>0$, there exists a sufficiently large integer $k=k(\varepsilon)$ with the following property. For every graph $G$ with $n\leq (1.64 - \varepsilon)k$ vertices and chromatic number $k$, we have $\cn(G) \geq \cn(K_k)$.
\end{theorem}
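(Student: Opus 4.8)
The plan is to derive Theorem~\ref{main2} from Theorem~\ref{main1}(ii) together with the principle that a weak immersion of $K_k$ forces the crossing number to be at least $\cn(K_k)$. Since $\cn$ is monotone under taking subgraphs, I would first pass to a vertex-critical subgraph and so assume $\delta(G)\ge k-1$; this, with $n\le(1.64-\varepsilon)k$, makes $G$ dense (its complement has small maximum degree), but the essential consequence I need is only that, applying Theorem~\ref{main1}(ii) with $\varepsilon/2$ in place of $\varepsilon$ and $k=k(\varepsilon)$ large, $G$ contains a weak immersion $\phi$ of $K_k$: branch vertices $v_1,\dots,v_k$ and pairwise edge-disjoint paths $P_{ij}=\phi(ij)$.

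Next I would fix a crossing-optimal drawing $D$ of $G$, which may be assumed to be a good drawing (no edge crosses itself or an adjacent edge, and any two edges cross at most once). For each edge $ij$ of $K_k$, let $\gamma_{ij}$ be the curve obtained by concatenating, along $P_{ij}$, the arcs drawn by $D$; placing the vertex $i$ of $K_k$ at the point $D(v_i)$ and using the $\gamma_{ij}$ as the edges gives a drawing $D'$ of $K_k$. Every crossing of $D'$ is of one of three kinds: (a) a crossing of $D$ between an edge of $P_{ij}$ and an edge of $P_{i'j'}$ with $ij\ne i'j'$ — since the paths are edge-disjoint, distinct such crossings of $D'$ come from distinct crossings of $D$, so there are at most $\cn(G)$ of them; (b) a crossing of $D$ between two edges of the same $P_{ij}$, i.e.\ a self-crossing of $\gamma_{ij}$, which can be removed by shortcutting $\gamma_{ij}$ without creating new crossings; or (c) a point where $\gamma_{ij}$ and $\gamma_{i'j'}$ both pass through a common vertex $w$ of $G$ and the pairs of edges they use at $w$ interleave in the rotation of $D$ at $w$. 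If crossings of kind~(c) did not occur, then after the shortcutting in~(b) we would have a drawing of $K_k$ with at most $\cn(G)$ crossings, giving $\cn(K_k)\le\cn(G)$, as desired.

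So the crux is to choose the weak immersion so that no two of its paths interleave at a common vertex. When the two paths share a branch endpoint, say $P_{xy}$ and $P_{xz}$ both pass through $w\neq v_x$, writing $P_{xy}=AB$, $P_{xz}=CD$ with $A,C$ the subpaths from $v_x$ to $w$, I would replace them by (shortcuts of) $AD$ and $CB$: this is again an edge-disjoint system realizing the same pairs of $K_k$, it does not interleave at $w$ any more, and the total path-length does not increase; setting up a potential that decreases strictly under such moves (total length, then the number of interleaving incidences) should eliminate all interleavings between paths sharing an endpoint. The genuinely delicate case is two paths $P_{xy},P_{zt}$ with $\{x,y\}\cap\{z,t\}=\emptyset$ that interleave at $w$: the analogous swap would change their endpoints, so here I would instead work with a weak immersion minimizing $\sum_{ij}|E(P_{ij})|$ and use the density of $G$ to reroute one of the two paths around $w$, arguing that in a minimal immersion such an interleaving cannot survive. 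This rerouting/minimality analysis — making all interleavings of kind~(c) vanish while keeping the immersion valid — is where I expect the main technical difficulty to lie; if one can only bound the total number of interleavings by $o(\cn(K_k))$ (using that interleavings occur only along the few ``detour'' paths needed for the non-edges of $G$ among the $v_i$), then the exact inequality $\cn(G)\ge\cn(K_k)$ would have to be recovered by a supplementary parity or counting argument, possibly passing through the odd- or pair-crossing number.
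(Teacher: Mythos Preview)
Your plan hinges on a statement that is not known and that you do not prove: that a weak immersion of $K_k$ can always be chosen so that, in an optimal drawing of $G$, no two immersion paths with \emph{disjoint} endpoint pairs interleave at a common internal vertex. Your swap trick handles paths sharing an endpoint, but for $P_{xy}$ and $P_{zt}$ with $\{x,y\}\cap\{z,t\}=\emptyset$ the swap changes the endpoint pairing and is illegal; your proposed fix (``reroute one path around $w$ using density, in a length-minimal immersion'') is not an argument, and there is no reason a minimal immersion should avoid such interleavings. The fallback via odd- or pair-crossing number does not repair this either: those invariants are only known to agree with $\cn$ up to polylogarithmic factors, not to recover an exact inequality. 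In effect you are trying to prove ``$K_k$ weakly immersed in $G$ $\Rightarrow$ $\cn(G)\ge\cn(K_k)$'' in general, which, if true, would make the Lescure--Meyniel conjecture imply Albertson's conjecture outright; the paper does not claim this and it is not what is established.

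The paper's route is quite different. It first accepts the loss from your type~(c) crossings: drawing $K_k$ along the immersion paths and rerouting each path around every internal branch vertex yields $\cn(G)>\cn(K_k)-k^3/2$ (Lemma~\ref{asympt}), with the $k^3/2$ coming from an explicit count of at most $(n-k)n^2/8$ crossings near non-branch vertices and $k\cdot\frac{n-k}{2}\cdot\frac{n}{2}$ near branch vertices. The exact inequality is then recovered not by eliminating these crossings but by finding \emph{additional} crossings elsewhere. One proves by induction on $n-k$ the stronger bound $\cn(G)\ge\cn(K_k)+c(n-k)k^3$ for a small constant $c>0$, using Gallai's decomposition $V(G)=V_1\cup\cdots\cup V_t$ and splitting into three cases: a small non-singleton part $V_i$ is completed to a clique via Lemma~\ref{addedge} and the induction hypothesis is applied to the resulting $k'$-critical graph; a part with $k_i$ large contributes, via the crossing lemma applied to $G[W_i]$ (edges \emph{not} used by the immersion), at least $2^{-11}k_i^3(n_i-k_i)$ extra crossings; and if all non-singleton parts are large but have small $k_i$, the clique on the singleton parts together with $\cn(K_{|A|,|B|})$ already exceeds $\cn(K_k)$ by $\Omega(k^4)$. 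This structural surplus is what closes the $O(k^3)$ gap --- not a cleverer choice of immersion.
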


Our proof of Theorem \ref{main2} is based on Theorem \ref{main1}. It shows that any improvement of the constant factor in Theorem \ref{main1} would lead to the same improvement of the constant factor in Theorem \ref{main2}. 
\smallskip

\noindent\textbf{Organization.} We start Section~\ref{sec2} by establishing Theorem~\ref{main1}(i). The proof uses a classic result of Gallai~\cite{Ga} which characterizes $k$-critical graphs with at most $2k-2$ vertices. We will also apply an old theorem of Shannon~\cite{Shannon} about the chromatic index of multigraphs. The proof of Theorem~\ref{main1}(ii) follows the same idea, but now we have to be more careful with the construction of the multigraph whose chromatic index needs to be estimated. In addition to Shannon's theorem, we use another upper bound on the chromatic index, due to Vizing and Gupta~\cite{V,G}. The skeleton of the proof of Theorem~\ref{main1}(ii) is presented at the end of Section~\ref{sec2}, but the proof of Lemma~\ref{lem:fix}, the most important technical step leading to the improvement, will be postponed to Section~\ref{sec3}.  Section~\ref{sec4} is devoted to the proof of Theorem~\ref{main2}.

\section{Weak immersion}\label{sec2}

In this section, we prove Theorem \ref{main1}.  We begin by focusing on the proof of part (i), and then discuss how the proof can be modified to obtain part (ii). 

Given a graph $G = (V,E)$ and vertex $v \in V$, let $N_G(v)$ denote the degree of vertex $v$ in $G$.  If it is clear which graph we consider, we will drop the index $G$ and simply write $N(v)$. If we allow that two vertices of $G$ are connected by several edges, then $G$ is called a \emph{multigraph}. The number of edges connecting two given vertices is called the \emph{multiplicity} of the edge.

A classic result due to Gallai states that if $G$ is a $k$-critical graph on $n$ vertices, where $n \leq 2k-2$, then the complement of $G$ is disconnected.  This immediately implies the following.

\begin{lemma}[\cite{Ga}]\label{lemGa}
Let $k,n$ be positive integers with $n \leq 2k-2$.  If $G$ is a $k$-critical graph on $n$ vertices, then there is a vertex partition 
\[V(G) = V_1\cup V_2\cup \cdots \cup V_t,\]

\noindent where $t\geq 2$, such that $V_i$ is complete to $V_j$, for $i\neq j$, $|V_i| = n_i$, and the induced subgraph $G[V_i]$ is $k_i$-critical with $n_i \geq 2k_i - 1$. 
\end{lemma}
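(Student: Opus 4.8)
The plan is to invoke Gallai's theorem --- the fact quoted above that the complement of a $k$-critical graph on at most $2k-2$ vertices is disconnected --- \emph{twice}: once on $G$ itself to produce the partition, and once on each part to force the inequality $n_i \ge 2k_i-1$.

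First, I would take $V_1,\dots,V_t$ to be the vertex sets of the connected components of the complement $\overline{G}$. Since $n\le 2k-2$, Gallai's theorem guarantees that $\overline{G}$ is disconnected, so $t\ge 2$. For $i\ne j$ there is no edge of $\overline{G}$ between $V_i$ and $V_j$, hence $V_i$ is complete to $V_j$ in $G$; equivalently, $G$ is the join of the induced subgraphs $G[V_1],\dots,G[V_t]$. Setting $k_i:=\chi(G[V_i])$ and using that the chromatic number of a join of graphs equals the sum of the chromatic numbers of the parts, we get $k_1+\dots+k_t=\chi(G)=k$, and in particular $k_i\ge 1$ for each $i$, since every $V_i$ is nonempty.

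Next I would check that each $G[V_i]$ is $k_i$-critical, which is the hypothesis needed to apply Gallai's theorem a second time. If we delete an edge $e$ or a vertex $v$ lying in $V_i$ from $G$, the result is the join of $G[V_i]-e$ (respectively $G[V_i]-v$) with the untouched parts $G[V_j]$, $j\ne i$, so $\chi(G-e)=\chi(G[V_i]-e)+\sum_{j\ne i}k_j$ and similarly for $G-v$. Since $G$ is $k$-critical the left-hand side is at most $k-1=(\sum_j k_j)-1$, so both $\chi(G[V_i]-e)$ and $\chi(G[V_i]-v)$ are at most $k_i-1$. Every proper subgraph of $G[V_i]$ is contained in some $G[V_i]-e$ or $G[V_i]-v$, and the chromatic number is monotone under taking subgraphs, so $G[V_i]$ is $k_i$-critical.

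Finally, suppose toward a contradiction that $n_i\le 2k_i-2$ for some $i$. Then Gallai's theorem applies to the $k_i$-critical graph $G[V_i]$ and asserts that $\overline{G[V_i]}$ is disconnected. But $\overline{G[V_i]}=\overline{G}[V_i]$, and $V_i$ was chosen as the vertex set of a connected component of $\overline{G}$, so $\overline{G}[V_i]$ is connected --- a contradiction. Hence $n_i\ge 2k_i-1$ for every $i$, as claimed. I do not expect a genuine obstacle here, since the lemma is a short deduction from Gallai's theorem; the only point requiring care is the middle step, namely verifying that the parts $G[V_i]$ are themselves critical (and not merely of chromatic number $k_i$), for which the additivity of $\chi$ over joins applied to single-edge and single-vertex deletions is exactly what is needed.
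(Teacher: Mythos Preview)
Your argument is correct and is precisely the deduction the paper has in mind: the paper does not give a proof of this lemma, but states Gallai's result that the complement of a $k$-critical graph on at most $2k-2$ vertices is disconnected and asserts that the lemma ``immediately'' follows. Your proof supplies exactly those details---taking the $V_i$ to be the connected components of $\overline{G}$, checking criticality of each $G[V_i]$ via additivity of $\chi$ over joins, and then reapplying Gallai's theorem to each part to rule out $n_i\le 2k_i-2$---so the approaches coincide.
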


The {\it chromatic index} $\chi'(H)$ of a multigraph $H$ without loops is the minimum number of colors needed to properly color the edges of $H$, i.e., to color them in such a way that no two edges that share a vertex receive the same color.  We will need the following well known results about the chromatic index of graphs.

\begin{lemma}\label{vizing} Let $H$ be a multigraph without loops, with maximum degree $\Delta$ and edge multiplicity at most $\mu$. Then the chromatic index of $H$ satisfies\\
(i) (Shannon~\cite{Shannon})
$\chi'(H) \leq 3\Delta/2;$ \\
(ii) (Gupta~\cite{G}, Vizing~\cite{V})
$\chi'(H) \leq \Delta + \mu.$
\end{lemma}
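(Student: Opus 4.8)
Both parts of Lemma~\ref{vizing} are classical — part~(i) is Shannon's theorem and part~(ii) is the Vizing--Gupta theorem — so in the paper I would simply cite \cite{Shannon,V,G}. For a self-contained account, the plan is to prove both by the standard Vizing-style recoloring (Kempe chain) argument, run as an induction on the number of edges of $H$. First I would remove an edge $e=xy$, apply induction to color $H-e$ properly with the target number $k$ of colors ($k=\lfloor 3\Delta/2\rfloor$ in (i), $k=\Delta+\mu$ in (ii)), and then try to extend the coloring to $e$. Writing $M(w)$ for the set of colors missing at a vertex $w$, note that $|M(x)|,|M(y)|\ge k-\Delta+1$, since in $H-e$ each of $x,y$ has at most $\Delta-1$ colored incident edges; if $M(x)\cap M(y)\neq\emptyset$ we color $e$ with a common color and are done. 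Otherwise we fix $\alpha\in M(x)$, $\beta\in M(y)$ and look at the subgraph of edges colored $\alpha$ or $\beta$: it is a disjoint union of paths and even cycles, and if $x$ and $y$ lie in different path-components we swap $\alpha\leftrightarrow\beta$ along the one containing $x$ and then color $e$ with $\beta$.

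The only real difficulty — and the step I expect to be the main obstacle in both parts — is the \emph{bad case}, where $x$ and $y$ are the two endpoints of a single $\{\alpha,\beta\}$-alternating path, so a one-step Kempe swap along it merely relocates the conflict. For part~(ii), the resolution is Vizing's \emph{fan} argument: build a maximal sequence $y=y_1,y_2,\dots,y_t$ of distinct neighbors of $x$ with $xy_1=e$ and, for each $i$, the edge $xy_{i+1}$ colored by a color missing at $y_i$; the hypothesis that the multiplicity is at most $\mu$ (together with $|M(y_i)|\ge\mu+1$) is exactly what makes this fan well-defined with distinct vertices. If some $y_i$ misses a color that $x$ also misses, rotating the colors along the fan frees the edge $xy_i$ and we color it; otherwise an $\{\alpha,\beta\}$-Kempe swap with $\alpha\in M(x)$, $\beta\in M(y_t)$, possibly after a partial rotation of the fan, manufactures a common missing color. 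A short case analysis on whether the $\{\alpha,\beta\}$-path from $x$ reaches $y_t$ closes the induction; this is the argument of \cite{V,G}.

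For part~(i), I would first observe that when $\mu\le\lfloor\Delta/2\rfloor$ the statement is immediate from part~(ii), since $\chi'(H)\le\Delta+\mu\le\lfloor 3\Delta/2\rfloor$; so the substance is again the bad case, where now each vertex misses at least $\lfloor\Delta/2\rfloor+1$ of the $\lfloor 3\Delta/2\rfloor$ colors, leaving a large surplus of free colors. Here I would walk along the alternating path $P$ from $x$ toward $y$ and use this surplus to locate an edge of $P$ that can be recolored (by a color missing at $x$) so as to detach $y$ from the component of $x$ without re-creating the obstruction, after which a single swap finishes; this is Shannon's original counting argument \cite{Shannon}. In both parts the bookkeeping of which colors are missing where, and checking that the fan/path manipulations stay proper, is routine but is where all the care goes; since the two results are standard textbook facts, the paper will just invoke them with references to \cite{Shannon,V,G} rather than reproducing the proofs.
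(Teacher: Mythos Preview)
Your proposal is correct and matches the paper exactly: the paper does not prove Lemma~\ref{vizing} at all but simply states it with attributions to Shannon~\cite{Shannon}, Gupta~\cite{G}, and Vizing~\cite{V}, precisely as you anticipated. Your self-contained sketch is extra material beyond what the paper includes.
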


\begin{proof}[Proof of Theorem \ref{main1}(i)]  We may assume that $k\geq 7$, since otherwise we obtain a weak immersion of $K_k$ by \cite{GLW}.  By possibly deleting vertices and edges, we may assume without loss of generality that $G$ is $k$-critical. Let $n=|V(G)|$ so $n \leq 1.4(k-1) \leq 2k-2$. By Lemma \ref{lemGa}, there is a vertex partition $V(G)=V_1 \cup \cdots \cup V_t$, where $t\geq 2$, such that $V_i$ is complete to $V_j$ for each $i \not = j$, and the induced subgraph $G[V_i]$ is $k_i$-critical for each $i$ with $n_i \geq 2k_i-1$ vertices.
Hence, $n= \sum_{i=1}^t n_i$ and $k=\sum_{i=1}^t k_i$. For each $i$, 
arbitrarily partition $V_i = U_i \cup W_i$ with $|U_i| = k_i$, so $|W_i| = n_i - k_i$. Let $U= \bigcup_i U_i$. 

In what follows, we will construct a weak immersion of $K_k$ with $U$ being the set of branch vertices.  Moreover, we will use all edges in $U$ as paths of length one in the weak immersion. By the Gallai decomposition, each nonadjacent pair of vertices in $U$ has both of its vertices in $U_i$ for some $i$. For each $i$ and vertex $u \in U_i$, let $f_u$ be a one-to-one function from the set of non-neighbors of $u$ in $U_i \setminus \{u\}$ to the set of neighbors of $u$ in $W_i$. Such a function $f_u$ exists as the degree of $u$ in $G[V_i]$ is at least $k_i-1$, as $G[V_i]$ is $k_i$-critical. If a nonadjacent pair $(u,u')$ of vertices in $U_i$ satisfies $f_u(u’)=f_{u'}(u)$, then we connect $u$ and $u’$ in the weak immersion by the path of length two with middle vertex $f_u(u’)$. Moreover, these paths will be edge-disjoint as $f_u$ is one-to-one.  See Figure~\ref{figpathu1}.

So far we have constructed edge-disjoint paths (which are of length one or two) connecting some pairs of branch vertices. We next describe how we connect the remaining pairs of vertices in $U$ by paths, which will each be of length four. For a pair $(u,u')$ of nonadjacent vertices in $U_i$ with $f_u(u') \not = f_{u'}(u)$, we will pick a vertex $t \in U \setminus U_i$ and the path of length four connecting $u$ to $u'$ will have the vertices in order as $u,f_u(u'),t,f_{u'}(u),u'$. We next describe how to pick the vertex $t=t(u,u')$ for each such pair $u,u'$ of nonadjacent vertices in the same $U_i$ with $f_u(u') \not = f_{u'}(u)$.  See Figure \ref{figpathu2}.

 \begin{figure}
  \centering
  \subfigure[{$f_u(u') = f_{u'}(u)$.}]{\label{figpathu1}\includegraphics[width=0.169\textwidth]{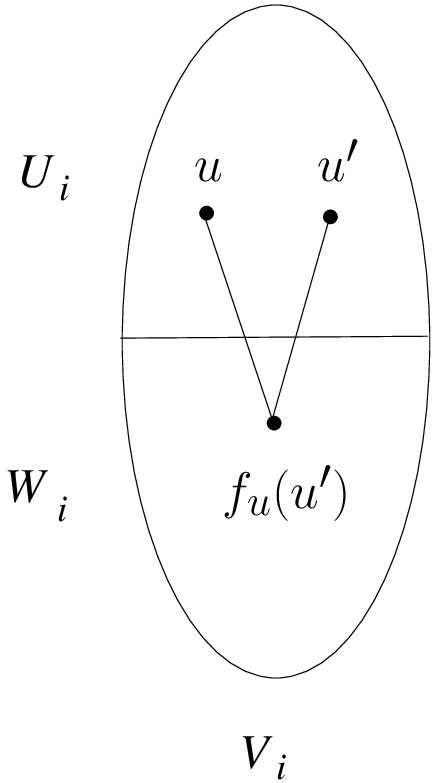}}\hspace{3cm}
    \subfigure[$f_u(u') \neq f_{u'}(u)$]{\label{figpathu2}\includegraphics[width=0.48\textwidth]{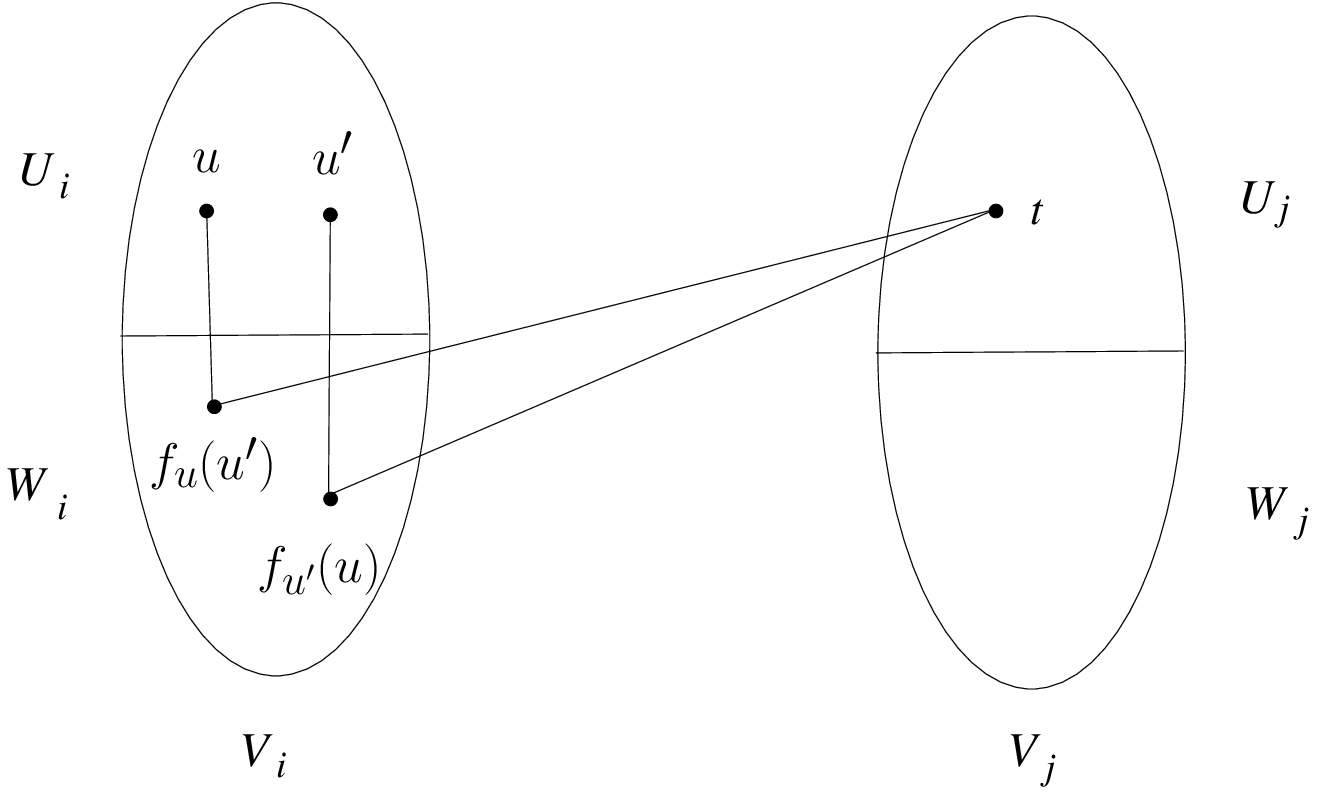}}
 \caption{Constructing a path from $u$ to $u'$.}
\end{figure}

Make an auxiliary multigraph $H_i$ on $W_i$ as follows. For each nonadjacent pair $(u,u’)$ with $u,u' \in U_i$ and $f_u(u') \not = f_{u'}(u)$, we add an edge between $f_u(u')$ and $f_{u'}(u)$ in $H_i$.  Clearly, $H_i$ does not contain loops as we require $f_u(u') \not = f_{u'}(u)$.  Since $f_u$ is one-to-one, the maximum degree in $H_i$ is at most $|U_i|=k_i$. Being able to pick the desired vertex $t = t(u,u') \in U \setminus U_i$ for each nonadjacent pair $u,u' \in U_i$ with $f_u(u') \not = f_{u'}(u)$, in order to obtain the desired paths of length four for the immersion, is equivalent to being able to properly color the edges of $H_i$ with color set $U \setminus U_i$.  As $n_i \geq 2k_i  - 1$, we have

\begin{equation}\label{firstinequality} k_i \leq n_i-k_i+1 = |W_i| + 1 \leq n-k+1 \leq 0.4(k-1),\end{equation}

\noindent where the last inequality follows from the fact that $n \leq 1.4(k-1).$  This implies that

\[|U\setminus U_i| = k - k_i \geq \frac35k.\]

\noindent On the other hand, by Lemma~\ref{vizing}(i), we have 
\begin{equation}\label{chiindex} \chi'(H_i) \leq 3k_i/2 \leq \frac35(k-1).
\end{equation} 
By combining the last two inequalities, we have $\chi'(H_i) \leq |U\setminus U_i|$, and therefore, we are able to find such a proper edge-coloring, completing the proof.
\end{proof}

In order to establish Theorem \ref{main1}(ii), we modify the above proof. The improvement will come from picking the subsets $U_i \subset V_i$ more carefully (rather than arbitrarily), which will allow us to obtain a better bound on the chromatic index $\chi'(H_i)$ than that given by (\ref{chiindex}). We will need the following lemma, whose proof is given in the next section. 

% First, we replace the use of Lemma \ref{vizing}(i) by Lemma \ref{vizing}(ii), which tells us that the chromatic index of a multigraph $H$ is at most $\Delta+\mu$, where $\Delta$ is its maximum degree and $\mu$ is its edge multplicity. 

\begin{lemma}\label{lem:fix}
Let $k_i$ be a sufficiently large positive integer and $G[V_i]$ be a graph with $n_i \geq 2k_i-1$ vertices with minimum degree at least $k_i-1$.

Then there are a choice of $U_i\subset V_i$ with $|U_i|=k_i$ and one-to-one maps $$f_u:U_i \setminus (N_G(u) \cup \{u\}) \to W_i \cap N_G(u)$$ for $u \in U_i$ (where $W_i:=V_i \setminus U_i$) such that the chromatic index of the multigraph $H_i$, defined as in the proof of Theorem 
\ref{main1}(i), satisfies 
$$\chi'(H_i) \leq \left(\frac{9}{16}+o(1)\right)k_i.$$
Here, the $o(1)$ term is a function which tends to $0$ as $k_i \to \infty$. 
\end{lemma}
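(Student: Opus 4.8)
The plan is to choose $U_i\subset V_i$ so that the auxiliary multigraph $H_i$ has \emph{small maximum degree} (rather than just the trivial bound $k_i$) and then apply Lemma~\ref{vizing}, interpolating between Shannon's bound $3\Delta/2$ and the Vizing--Gupta bound $\Delta+\mu$ depending on the edge multiplicities of $H_i$. Recall that a vertex $w\in W_i$ has $H_i$-degree equal to the number of nonadjacent pairs $(u,u')$ in $U_i$ with $\{f_u(u'),f_{u'}(u)\}=\{w,\cdot\}$; since each $f_u$ is one-to-one, $w$ collects at most one ``half-edge'' from each $u\in U_i$ with $w\in N_G(u)$, so $\deg_{H_i}(w)\le |U_i\cap N_G(w)|$. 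The key idea is to pick $U_i$ to be a set of $k_i$ vertices of \emph{high degree} in $G[V_i]$ — say the $k_i$ vertices of largest degree — and then route each ``deficiency edge'' of a low-degree vertex $u$ through neighbors in $W_i$ that themselves have few neighbors in $U_i$. Because $G[V_i]$ has $n_i\ge 2k_i-1$ vertices and minimum degree $\ge k_i-1$, a counting argument bounds the total degree into $U_i$ from $W_i$: $\sum_{w\in W_i}|N_G(w)\cap U_i|=\sum_{u\in U_i}|N_G(u)\cap W_i|\le |U_i|\,|W_i|$, but also each $u\in U_i$ with large degree has most of its neighborhood inside $U_i$, which forces the $W_i$-to-$U_i$ degrees to be small on average, and we use a greedy/probabilistic selection of the maps $f_u$ to spread the load so that no $w$ is overloaded.

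Concretely, I would proceed as follows. \textbf{Step 1 (choice of $U_i$).} Order the vertices of $V_i$ by nonincreasing degree and let $U_i$ be the top $k_i$; set $W_i=V_i\setminus U_i$. Let $d_u=|N_G(u)\cap U_i|$ for $u\in U_i$, so the deficiency $\delta_u:=k_i-1-d_u$ equals the number of edges of $H_i$ with an endpoint ``owned'' by $u$ (each needing a neighbor in $W_i\cap N_G(u)$). Since $u$ has degree $\ge k_i-1$ in $G[V_i]$ and $|U_i|=k_i$, one gets $|N_G(u)\cap W_i|\ge \delta_u$, so the required one-to-one maps exist; the point is to choose them well. \textbf{Step 2 (balancing the maps).} Show that $\sum_{u\in U_i}\delta_u$ is not too large — roughly $\sum \delta_u \le \sum_{u\in U_i}|N_G(u)\cap W_i|$, and using that vertices in $U_i$ have the largest degrees while $|W_i|=n_i-k_i$ can be as large as $k_i-1$, one optimizes the worst case. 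Then, via a random or greedy assignment of the maps $f_u$ into $W_i\cap N_G(u)$, argue (Chernoff/alteration, or a deterministic defect-version of Hall's theorem) that one can ensure $\deg_{H_i}(w)\le (9/16+o(1))k_i$ for every $w\in W_i$; this is where the constant $9/16$ must be made to appear by carefully balancing the extremal case $n_i\approx 2k_i-1$ against the degree constraints. \textbf{Step 3 (bounding $\chi'(H_i)$).} If all multiplicities in $H_i$ are small, Vizing--Gupta gives $\chi'(H_i)\le \Delta+\mu \le (9/16+o(1))k_i$; if some multiplicity is large, that can only happen when many deficiency pairs share the \emph{same} pair $\{w,w'\}$, which one shows forces $\Delta$ to be correspondingly smaller, so Shannon's bound $3\Delta/2$ still lands below $(9/16+o(1))k_i$. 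Combining the two regimes yields the claimed bound.

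The main obstacle, and the crux of the argument, is \textbf{Step 2}: proving that the maps $f_u$ can be chosen to keep $\max_w \deg_{H_i}(w)$ down to $(9/16+o(1))k_i$ uniformly, in the worst case over all admissible $G[V_i]$. The adversary will try to make $W_i$ small and concentrated — forcing the $\binom{k_i}{2}-|E(G[U_i])|$ deficiency edges of $H_i$ into few vertices — so I expect the real work is a weighted Hall-type / flow argument (or an entropy-compression / Lovász Local Lemma style spreading) showing that the degree sequence constraints inherited from ``$U_i$ = top $k_i$ degrees'' and ``$\min\deg\ge k_i-1$'' are incompatible with any such concentration beyond the $9/16$ threshold. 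Getting the precise constant $9/16$ (rather than something weaker) will require solving the resulting linear/convex optimization exactly, which is the delicate calculation I would defer to the detailed proof.
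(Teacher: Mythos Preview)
Your outline has the right flavor---choose $U_i$ to favor high-degree vertices, balance the $f_u$, apply Vizing--Gupta---but two essential ingredients are missing, and without them the constant $9/16$ cannot be reached. First, the paper does not take the top $k_i$ vertices by degree. It fixes the threshold $d=\tfrac{9}{8}k_i$, lets $U(d)$ consist of the (at most $k_i-k_i^{0.9}$) vertices of degree $\ge d$, puts $U(d)\subset U_i$, and fills the remaining $\ge k_i^{0.9}$ slots of $U_i$ \emph{uniformly at random}; the maps $f_u$ are then also completed uniformly at random. Crucially, before that random completion there is a ``pre-matching'' reduction: whenever $u\in U(d)$ and $u'\in U_i$ are nonadjacent but share a common neighbor $w\in W_i$, one sets $f_u(u')=f_{u'}(u)=w$, so this pair produces a length-$2$ path and \emph{no} edge of $H_i$. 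Iterating this (passing from $G$ to a modified graph $G^*$) forces every surviving neighbor $u\in U(d)$ of $w$ to be adjacent to all other $U_i$-neighbors of $w$; combined with the random completion of $f_u$ this yields the sharper bound $\deg_{H_i}(w)\le |U_w|-\ell_w\,\frac{d-1-k_i}{d-1-|U_w|}+o(k_i)$ (where $\ell_w=|N(w)\cap U(d)|$), not merely $\deg_{H_i}(w)\le|U_w|$. The value $9/16$ then falls out of an explicit optimization of this expression subject to the concentration bound on $|U_w|$, with $d/k_i=9/8$ chosen precisely so that the worst case equals $\tfrac12\cdot\tfrac{9}{8}=\tfrac{9}{16}$. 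Your bound $\deg_{H_i}(w)\le|U_i\cap N_G(w)|$ alone is too weak: when $w$ has many high-degree $U_i$-neighbors this quantity can be close to $k_i$, and neither ``top $k_i$ by degree'' nor a clever spreading of the $f_u$ repairs that without the pre-matching step.

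Second, your Step~3 is a genuine error. Shannon gives $\chi'\le\tfrac{3}{2}\Delta$, so even with $\Delta\approx\tfrac{9}{16}k_i$ you would only get $\tfrac{27}{32}k_i$; there is no mechanism by which large multiplicity forces $\Delta$ down to $\tfrac{3}{8}k_i$. The paper never falls back on Shannon here. Instead it proves directly, via Chernoff and Azuma over the random choices of $U_i$ and of the $f_u$, that the maximum multiplicity satisfies $\mu(H_i)=o(k_i)$ with probability $1-o(1)$, and then Vizing--Gupta alone gives $\chi'(H_i)\le\Delta+\mu\le(\tfrac{9}{16}+o(1))k_i$. The guaranteed $k_i^{0.9}$ random vertices in $U_i$ are exactly what ensures every $u\in U_i$ has $\Omega(k_i^{0.9})$ neighbors in $W_i$, which is the input to the multiplicity bound; a purely deterministic ``top $k_i$'' choice does not provide this.
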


\begin{proof}[Proof of Theorem \ref{main1}(ii)] 
By Lemma \ref{lem:fix} above, we can choose $U_i$ and the one-to-one functions $f_u$ for $u \in U_i$ (rather than arbitrarily), to guarantee the following. For each $\varepsilon>0$ there is $C=C(\epsilon)$ such that if $k_i \geq C$, then the corresponding multigraph $H_i$ satisfies 
\begin{equation}\label{chibetter}
\chi'(H_i) \leq \left(\frac9{16}+\varepsilon\right)k_i.
\end{equation}
 
As in the proof of Theorem \ref{main1}(i), if $\chi'(H_i) \leq k-k_i$ for every $i$, then $G$ contains the desired weak immersion of $K_k$, and we are done. So, we may assume, for the sake of contradiction, that there is some $i$ for which $\chi'(H_i) > k-k_i$. If $k_i < C$, we also use the bound $\chi'(H_i) \leq 3k_i/2$ as in the proof of Theorem \ref{main1}(i) to obtain $k < 5C/2$. However, this contradicts our assumption that $k$ is sufficiently large. So, we may assume $k_i \geq C$ and, hence, $(9/16+\varepsilon)k_i \geq \chi'(H_i) > k-k_i$, which implies that $$k_i > k\big/\left(\frac{25}{16}+\varepsilon\right).$$ Similar to (\ref{firstinequality}), as $n_i \geq 2k_i - 1$, we have 
$$ k_i \leq n_i-k_i+1 = |W_i| + 1 \leq n-k+1.$$
If $k$ is sufficiently large, the previous two inequalities imply that $n > (1.64-\varepsilon)k$, contradicting the assumed bound on $n$. This completes the proof of Theorem \ref{main1}(ii). \end{proof}

To get the better upper bound on $\chi'(H_i)$ in Lemma \ref{lem:fix}, we pick the set $U_i$ by first including all vertices of $G[V_i]$ of degree at least some threshold $d_i$, and the remaining vertices we pick uniformly at random.\footnote{This is not quite accurate, as if there is more than $k_i-k_i^{0.9}$ vertices of degree at least $d_i$, we only pick $k_i-k_i^{0.9}$ of them to be in $U_i$ and pick an additional $k_i^{0.9}$ uniform random vertices to fill out the rest of $U_i$. Guaranteeing that there is still a reasonable amount of $U_i$ that is picked uniformly at random allows us to obtain that with probability $1-o(1)$ the edge multiplicity of $H_i$ is $o(k_i)$.} Instead of picking $U_i$ by this threshold, an alternative strategy is to add vertices to $U_i$ greedily one at a time that would have maximum expected degree in $H_i$ conditioned on the vertex otherwise being in $W_i$ and the remaining vertices picked in $U_i$ uniformly at random. We stop this greedy process if the expected maximum degree of $H_i$ is small enough, and add the remaining vertices of $U_i$ uniformly at random. We suspect this process would produce a set $U_i$ for which $\chi'(H_i)$ is a constant factor smaller, but analyzing this process appears to be more challenging. 

%\jacob{Add a comment on better constant factor.} 

\section{Proof of Lemma \ref{lem:fix}}~\label{sec3}
\label{betterchoices}

Before turning to the proof of Lemma \ref{lem:fix}, we need some preparation. We recall two well known probabilistic results from \cite{AlSp} which will be used in the sequel.

A \emph{martingale} is a sequence of random variables $X_0,X_1,\ldots,X_n$ that satisfies for any $0 \leq i < n$ that $\mathbf {E} [X_{i+1}\mid X_{1},\ldots ,X_{i}]=X_{i}$. 

\begin{lemma}\label{Azuma}
(i) (Chernoff's bound)  If $X$ is a sum of $n$ independent random variables taking values in $[0,1]$ and $\mu = \mathbb{E}[X]$, then for any $t>0$ we have $\mathbb{P}(X \geq \mu+t) < e^{-2t^2/n}$ and, similarly, $\mathbb{P}(X \leq \mu-t) < e^{-2t^2/n}$. 

(ii) (Azuma-Hoeffding bound) If $X_0,X_1,\ldots,X_n$ is a martingale which satisfies $|X_{i+1}-X_i| \leq c_i$ for each $0 \leq i <n$, then $\mathbb{P}(X_n-X_0>t) \leq  \exp \left(\frac{-t^{2}}{2\sum _{i=0}^{n-1}c_{i}^{2}}\right)$ and, similarly, $\mathbb{P}(X_n-X_0<-t) \leq  \exp \left(\frac{-t ^{2}}{2\sum _{i=0}^{n-1}c_{i}^{2}}\right)$.
\end{lemma}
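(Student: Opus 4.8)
The final statement to be proved is Lemma~\ref{Azuma}, a packaging of two standard concentration inequalities (Chernoff's bound and the Azuma--Hoeffding bound) that are quoted from \cite{AlSp}. Since these are textbook results, the proof plan is simply to reproduce the standard exponential-moment (Bernstein/Chernoff) arguments, and the only ``obstacle'' is bookkeeping constants so that they match the stated forms.

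\textbf{Part (i), Chernoff's bound.} Write $X = \sum_{j=1}^n Y_j$ with $Y_j \in [0,1]$ independent. For any $\lambda > 0$, apply Markov's inequality to $e^{\lambda X}$: $\mathbb{P}(X \geq \mu + t) \leq e^{-\lambda(\mu+t)} \prod_{j=1}^n \mathbb{E}[e^{\lambda Y_j}]$. The key estimate is the convexity bound: for $Y_j \in [0,1]$ with mean $\mu_j$, one has $\mathbb{E}[e^{\lambda Y_j}] \leq 1 - \mu_j + \mu_j e^{\lambda}$, but for the $e^{-2t^2/n}$ form the cleaner route is Hoeffding's lemma, $\mathbb{E}[e^{\lambda(Y_j - \mu_j)}] \leq e^{\lambda^2/8}$, valid for any random variable supported on an interval of length $1$. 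Multiplying over $j$ gives $\mathbb{P}(X - \mu \geq t) \leq e^{\lambda^2 n/8 - \lambda t}$; optimizing over $\lambda$ by setting $\lambda = 4t/n$ yields the bound $e^{-2t^2/n}$. The lower-tail bound $\mathbb{P}(X \leq \mu - t) < e^{-2t^2/n}$ follows by applying the same argument to $-X$ (equivalently, to the variables $1 - Y_j \in [0,1]$).

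\textbf{Part (ii), Azuma--Hoeffding.} Let $D_i = X_i - X_{i-1}$ be the martingale differences, so $\mathbb{E}[D_i \mid X_1,\ldots,X_{i-1}] = 0$ and $|D_i| \leq c_{i-1}$. For $\lambda > 0$, bound $\mathbb{E}[e^{\lambda(X_n - X_0)}]$ by iterated conditioning: $\mathbb{E}[e^{\lambda(X_n - X_0)}] = \mathbb{E}\big[e^{\lambda(X_{n-1}-X_0)} \, \mathbb{E}[e^{\lambda D_n} \mid X_1,\ldots,X_{n-1}]\big]$. Apply Hoeffding's lemma conditionally: a zero-mean variable supported on an interval of length $2c_{n-1}$ satisfies $\mathbb{E}[e^{\lambda D_n} \mid \cdots] \leq e^{\lambda^2 c_{n-1}^2/2}$. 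Peeling off one factor at a time gives $\mathbb{E}[e^{\lambda(X_n-X_0)}] \leq \exp\big(\tfrac{\lambda^2}{2}\sum_{i=0}^{n-1} c_i^2\big)$. Then Markov's inequality gives $\mathbb{P}(X_n - X_0 > t) \leq \exp\big(\tfrac{\lambda^2}{2}\sum_{i=0}^{n-1} c_i^2 - \lambda t\big)$, and choosing $\lambda = t / \sum_{i=0}^{n-1} c_i^2$ produces the stated $\exp\big(-t^2 / (2\sum_{i=0}^{n-1} c_i^2)\big)$. The lower-tail statement follows by replacing the martingale $(X_i)$ with $(-X_i)$, which is again a martingale with the same difference bounds.

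\textbf{Main point of care.} There is no genuine mathematical obstacle here; the statements are classical. The only thing to be careful about is the one lemma both parts rely on --- Hoeffding's lemma, that a random variable $Z$ with $\mathbb{E}[Z] = 0$ and $Z \in [a,b]$ satisfies $\mathbb{E}[e^{\lambda Z}] \leq e^{\lambda^2(b-a)^2/8}$ --- which I would either cite from \cite{AlSp} directly or prove in one line via convexity of $z \mapsto e^{\lambda z}$ on $[a,b]$ followed by a second-derivative estimate on the resulting function of $\lambda$. Given that the paper only needs these inequalities as black-box tools in Section~\ref{sec3}, it would in fact be reasonable to simply cite \cite{AlSp} and omit the proof entirely; I include the sketch above for completeness.
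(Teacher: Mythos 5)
Your proposal is correct: both parts follow from the standard exponential-moment argument via Hoeffding's lemma, with the right optimization of $\lambda$ in each case. The paper does not prove Lemma~\ref{Azuma} at all --- it quotes these inequalities as known results from \cite{AlSp} --- and your sketch is exactly the standard proof found in that reference, so citing it (as you suggest) is precisely what the authors do.
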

If instead of (i), $X$ has a hypergeometric distribution, i.e., $X$ counts the number of successes when sampling from a population without replacement, then $X$ is at least as concentrated as the corresponding binomial distribution, and hence it also satisfies Chernoff's bound (Lemma~\ref{Azuma}(i)). 

We will also need some simple probabilistic statements about graphs.

\begin{lemma}\label{aux1} Let $n$ be a sufficiently large positive integer, $s\le n$,  and let $F$ be a graph on $n$ vertices. For any vertex $w\in V(F)$, let $d_w$ denote the degree of $w$ in $F$. Let $S$ be a uniform random subset of $s$ vertices of $F$. 

(i) For any vertex $w\in V(F)$, the expected number of neighbors of $w$ that belong to $S$ is $d_w s/n$. 

(ii) The probability that there is a vertex $w$ for which the number of neighbors of $w$ in $S$ deviates from this expectation by more than $\sqrt{n \ln n}$ is $o(1)$.

(iii) If $F$ has maximum degree $d$ such that $s\le d \leq n^{.9}$, then with probability $1-o(1)$, every vertex $w$ has $o(s)$ neighbors in $S$. (Here, $o(s)$ stands for a function tending to $0$ as $s \to \infty$.)
\end{lemma}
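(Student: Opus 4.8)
The plan is to prove the three parts in order, using the concentration results from Lemma~\ref{Azuma} and the remark that hypergeometric random variables satisfy Chernoff's bound. For part (i), fix a vertex $w$ and write the number of neighbors of $w$ in $S$ as $X_w = \sum_{x \in N_F(w)} \mathbf{1}[x \in S]$. Since $S$ is a uniform random $s$-subset of the $n$ vertices, each individual vertex $x$ lies in $S$ with probability $s/n$, so by linearity of expectation $\mathbb{E}[X_w] = d_w \cdot s/n$. This is immediate and requires no concentration.

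For part (ii), I would observe that $X_w$ is exactly a hypergeometric random variable: it counts the number of "successes" (landing in the neighborhood of $w$, a set of size $d_w$) when drawing $s$ vertices without replacement from a population of size $n$. Hence by the remark following Lemma~\ref{Azuma}, $X_w$ satisfies Chernoff's bound: $\mathbb{P}(|X_w - d_w s/n| \geq t) \leq 2e^{-2t^2/s}$. Actually one must be a little careful about which parameter plays the role of "$n$" in the Chernoff exponent; the cleanest route is to note $X_w$ is at least as concentrated as a $\mathrm{Bin}(s, d_w/n)$ (sum of $s$ independent $[0,1]$-valued variables), so we may take the exponent $-2t^2/s$, and since $s \le n$ this is at most $-2t^2/n$. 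Taking $t = \sqrt{n \ln n}$ gives a per-vertex failure probability at most $2e^{-2\ln n} = 2/n^2$, and a union bound over the $n$ vertices gives total failure probability at most $2/n = o(1)$.

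For part (iii), the hypothesis $s \le d \le n^{0.9}$ is what makes the deviation bound of part (ii) useful as a bound on the \emph{absolute} count rather than just the deviation. Each vertex $w$ has $\mathbb{E}[X_w] = d_w s/n \le d s/n \le n^{0.9} s / n = s/n^{0.1}$, which is already $o(s)$. Combining with part (ii) (which holds with probability $1-o(1)$), every vertex simultaneously satisfies $X_w \le s/n^{0.1} + \sqrt{n \ln n}$. Since $d \ge s$ forces $n \ge s$ (as $d \le n$), but we need $\sqrt{n\ln n} = o(s)$ as well; this follows because $s \le d \le n^{0.9}$ is not quite enough on its own — but note $n \ge d \ge s$ only gives $\sqrt{n \ln n}$ could be as large as $\sqrt{n \ln n}$ while $s$ could be as small as... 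Here I would instead use that when $s$ is small, $X_w$ is trivially at most $s = o(s)$ is false; rather, the correct reading is that "$o(s)$" means a function of $s$ tending to $0$ relative to $s$ in the regime where $n,d,s$ all grow, and one checks $s/n^{0.1} + \sqrt{n\ln n} \le s \cdot \big( n^{-0.1} + \sqrt{n \ln n}/s\big)$ and uses $s \ge n^{?}$ — the honest move is to split: if $s \le n^{0.95}$ then... The main obstacle, then, is pinning down exactly the right quantitative relationship between $s$, $d$, and $n$ so that $\sqrt{n \ln n}$ is genuinely negligible compared to $s$; I expect the intended argument uses $d \le n^{0.9}$ together with $s \le d$ replaced in spirit by "$s$ comparable to $d$", or simply absorbs the $\sqrt{n\ln n}$ term into the $o(s)$ by noting that in the application $s = \Theta(k_i)$ while $n = n_i \le $ poly is controlled — so the clean statement is: with probability $1-o(1)$, $X_w \le d_w s/n + \sqrt{n \ln n} = o(s)$ for all $w$, where the $o(s)$ follows from $d_w \le d = o(n)$ and from $n$ being at most polynomial in $s$ in the relevant regime. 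Everything else (parts (i) and (ii)) is routine: linearity of expectation plus hypergeometric Chernoff plus a union bound.
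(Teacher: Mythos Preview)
Your treatment of parts (i) and (ii) is correct and matches the paper's argument: linearity of expectation for (i), and for (ii) the hypergeometric Chernoff bound with deviation $t=\sqrt{n\ln n}$ followed by a union bound over the $n$ vertices.

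Part (iii), however, has a genuine gap that you yourself flag but do not close. The route through part (ii) cannot work under the stated hypotheses: the error term $\sqrt{n\ln n}$ is \emph{not} in general $o(s)$. For a concrete instance, take $s=d=\lfloor n^{0.1}\rfloor$; then $s\le d\le n^{0.9}$ holds, yet $\sqrt{n\ln n}/s$ is of order $n^{0.4}\sqrt{\ln n}\to\infty$. So ``$X_w\le ds/n+\sqrt{n\ln n}$'' is true with high probability but says nothing useful here. Your attempted rescues (appealing to the application, or to an unstated polynomial relation between $n$ and $s$) are not proofs of the lemma as stated.

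The paper avoids this by abandoning the additive Chernoff deviation and instead bounding the upper tail of the hypergeometric directly at a threshold tailored to the parameters. Set
\[
r:=\max\bigl(100,\ \lfloor 3s\sqrt{d/n}\rfloor\bigr).
\]
For any vertex $w$,
\[
\mathbb{P}(X_w\ge r)\ \le\ \binom{d}{r}\binom{n-r}{s-r}\Big/\binom{n}{s}\ \le\ \Bigl(\frac{ed}{r}\Bigr)^{r}\Bigl(\frac{s}{n}\Bigr)^{r}=\Bigl(\frac{eds}{rn}\Bigr)^{r}\le (d/n)^{r/2}=o(1/n),
\]
using $d\le n^{0.9}$. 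A union bound over the $n$ vertices gives that with probability $1-o(1)$ every $X_w\le r$. Finally $r=o(s)$ because $\sqrt{d/n}\le n^{-0.05}\to 0$ (and $100/s\to 0$). This is the missing idea: choose the cutoff $r$ multiplicatively in $s$ via the small ratio $\sqrt{d/n}$, rather than rely on the fixed additive deviation $\sqrt{n\ln n}$ from (ii).
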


\begin{proof}
(i) Consider a vertex $w$ of $F$. For each vertex, the probability that it belongs to $S$ is $s/n$. So, by linearity of expectation, the expected number of neighbors of $w$ in $S$ is $d_w s/n$. 

(ii) The hypergeometric distribution is at least as concentrated as the corresponding binomial distribution (for a proof, see Section 6 of \cite{Hoeffding}). By Chernoff's bound (Lemma~\ref{Azuma}(i)), the probability that the number of neighbors of $w$ in $S$ deviates from its mean by more than $\sqrt{n\ln n}$ is at most $2e^{-2(\sqrt{n \ln n})^2/n}= 2n^{-2} = o(1/n)$. By the union bound, the probability that there is a vertex $w$ whose number of neighbors in $S$ exceeds its expected value is at most $n \cdot o(1/n)=o(1)$. 

(iii) Suppose that $F$ has maximum degree $d$ with $d \leq n^{.9}$, and let $r=\max(100,\lfloor 3s\sqrt{d/n}\rfloor)$. Each vertex $w$ has at most ${d \choose r}$ subsets of $r$ neighbors. The probability that a given vertex $w$ has at least $r$ neighbors in $S$ is at most 
$${d \choose r}{n-r \choose s-r}/{n \choose s} \leq (ed/r)^r(s/n)^r=\left(\frac{eds}{rn}\right)^r \leq (d/n)^{r/2}=o(1/n).$$
By the union bound over the $n$ vertices, with probability $1-o(1)$, every vertex has at most $r=o(s)$ neighbors in $S$. 
\end{proof}
\medskip

We now turn to the proof of Lemma \ref{lem:fix}. We need to show that for every $i$, one can choose $U_i \subset V_i$ and one-to-one functions $f_u$ such that the corresponding multigraph $H_i$ satisfies $$\chi'(H_i) \leq \left(\frac{9}{16}+o(1)\right)k_i.$$  Our proof is probabilistic. We fix an index $i$. To simplify the notation, in the rest of this section, we will suppress the index $i$. In particular, with no danger of confusion, we will write 
$n$, $k$,  $V = U\cup W$, and $H$ for $n_i$, $k_i$, $V_i = U_i \cup W_i$, and $H_i$,  respectively.

%\janos{I think that these two well known statements should be displayed as a two-part lemma. (i) Chernoff, (ii) Azum-Hoeffding with references to Alon-Spencer, say.} 

% The Chernoff bound in the form of Hoeffding's inequality says that if $X$ is a sum of $n$ independent random variables taking values in $[0,1]$ and $\mu = \mathbb{E}[X]$, then for any $t>0$ we have $\mathbb{P}(X \geq \mu+t) < e^{-2t^2/n}$ and similarly $\mathbb{P}(X \leq \mu-t) < e^{-2t^2/n}$. 

%\janos{Do we need this only in the positive direction? It should be remarked after lemma (ii)}\jacob{The setup of Azuma-Hoeffding is symmetric about $0$, so the negative direction has the same bound. I included both directions of both Chernoff and Azuma-Hoeffding, but I am inclined to delete the negative directions of both and just note the analogous negative direction holds by symmetry.}

\begin{proof}[Proof of Lemma \ref{lem:fix}]
Recall that for any $U \subset V$ with $|U|=k$, there is at least one one-to-one map $f_u$ from $U \setminus (N_G(u) \cup \{u\})$ to $W \cap N_G(u)$, since we have $|W \cap N(u)|-|U \setminus (N_G(u) \cup \{u\})|=\deg(u)-k+1 \geq 0$. The last inequality holds, because the the minimum degree of $G$ is at least $k-1$. 

In the next paragraph we describe how to choose the random set $U\subset V$, followed by how we pick the one-to-one functions $f_u$ for $u \in U$. Let $d:=9k/8=1.125k$. Let $\phi(k)=o(k)$ be such that $\phi(k)/k \to 0$ sufficiently slowly (for instance, $\phi(k)=k^{.9}$ will do).  
%\janos{I am not sure if $f$ is a good notation: it was used for the function $f_u$. Perhaps we can write $\phi(k)$, instead, without abbreviating it as $\phi$. }

If $G$ has less than $k-\phi(k)$ vertices of degree at least $d$, then let $U(d)$ be the set of vertices of degree at least $d$. In this case, all vertices not in $U(d)$ have degree less than $d$. Otherwise, there are at least $k-\phi(k)$ vertices of $G$ of degree at least $d$, and we let $U(d)$ be an arbitrary set of $k-\phi(k)$ vertices of $G$ of degree at least $d$. In either case, we let $\ell:=|U(d)|$, so $\ell \leq k-\phi(k)$. We let $U(d) \subset U$, picking the remaining $k-\ell \geq \phi(k)$ vertices of $U$ uniformly at random from the remaining $n-\ell$ vertices of $V$. %\janos{I changed the notation $U(d)$ to $U(d)$, because $U_i$ was used before.}

Having described the semi-random process for picking $U$, we next explain how to pick the one-to-one functions $f_u$ from $U \setminus (N_G(u) \cup \{u\})$ to $W \cap N_G(u)$, for each $u \in U$.   Suppose that there are vertices $w \in W $ and distinct $u,u' \in U$ with $u \in U(d)$ such that $u,u'$ are neighbors of $w$ and $u,u'$ are nonadjacent.  Then we set $f_u(u')=f_{u'}(u)=w$, partially defining $f_u$ and $f_{u'}$.  In order to define the rest of the mapping of $f_u$ from $U \setminus (N(u) \cup \{u,u'\})$ to $W \cap N(u)\setminus \{w\}$ (and the rest of the mapping $f_{u'}$), we modify our graph $G$ as follows. We delete edges $(u,w)$ and $(u',w)$  from $G$ and add the edge $(u,u')$ to obtain a new graph $G'$.  Notice that this operation, transforming $G$ to $G'$, did not change the degree of the two vertices $u,u' \in U$, but decreased by two the degree of the vertex $w \in W$. So, $\deg_{G'}(u) =\deg_{G}(u)$ for all $u \in U$ and $\deg_{G'}(w) \leq \deg_{G}(w)$ for all $w \in W$.  Moreover, we have

$$U \setminus (N_G(u) \cup \{u,u'\}) = U \setminus (N_{G'}(u) \cup \{u\})$$

\noindent and 

$$W \cap N_G(u)\setminus \{w\} = W \cap N_{G'}(u).$$

\noindent Thus, it now suffices to define the rest of the mapping $f_u$ from $U \setminus (N_{G'}(u) \cup \{u\})$ to $ W \cap N_{G'}(u)$ for each $u \in U$.

We keep repeating this operation, further (partially) defining the functions $f_u$ each time, until there is no such triple of vertices remaining.  At the end of this process, we obtain a new graph $G^{\ast}$ on $V$, and partially defined functions $f_u$ for each $u \in U$. Equivalently, defining the rest of each $f_u$ with respect to $G$ is the same as fully defining $f_u$ for each $u \in U$ with respect to the graph $G^*$. 
  
For each vertex $v$, let $$U_v:=N_{G^{\ast}}(v) \cap U.$$ We have $\deg_{G^{\ast}}(u) =\deg_{G}(u)$ for all $u \in U$, and $\deg_{G^{\ast}}(w) \leq \deg_{G}(w)$ for all $w \in W$. By the definition of $G^{\ast}$, for each $w \in W$ and $u \in U_w \cap U$, we have $U_w \setminus \{u\} \subset U_u$. Finally, we complete the definition of $f_u$, for each $u \in U$, by picking uniformly at random a one-to-one mapping from $U \setminus (N_{G^{\ast}}(u) \cup \{u\})$ to $W \cap N_{G^{\ast}}(u)$. 

Having described the semi-random process for picking $f_u$, we obtain the semi-random multigraph $H$.  Recall that $V(H) = W$ and for each nonadjacent pair $(u,u’)$ in $G$ with $u,u' \in U$ and $f_u(u') \not = f_{u'}(u)$, we add an edge between $f_u(u')$ and $f_{u'}(u)$ in $H$.  By Lemma \ref{vizing}(ii), the chromatic index of the multigraph $H$ satisfies $\chi'(H) \leq \Delta+\mu$. 
%It remains to analyze the maximum degree, $\Delta$, of our multigraph, and then its edge multiplicity, $\mu$. We will be able to guarantee that the edge multiplicity satisfies $\mu=o(k)$, so it will be a lower order term. 
Therefore, in order to complete the proof of Lemma~\ref{lem:fix}, it is sufficient to verify the following two statements.

\begin{proposition}
    \label{cl1}
With probability $1-o(1)$, the maximum degree $\Delta=\Delta(H)$ of the multigraph $H$ satisfies $$\Delta\le\left(\frac{9}{16}+o(1)\right)k,\;\; \mbox{ as}\;
\; k\rightarrow\infty.$$
\end{proposition}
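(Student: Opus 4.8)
The plan is to bound the degree in $H$ of an arbitrary vertex $w\in W$. By construction, $w$ receives one edge of $H$ for each nonadjacent pair $u,u'\in U$ with $f_u(u')=w$ or $f_{u'}(u)=w$; since $f_u$ is one-to-one, the contribution from pairs with $f_u(u')=w$ is at most $|\{u\in U: w\in N_{G^*}(u)\}| = |U_w|$, and similarly for the other direction, so $\deg_H(w)\le 2|U_w|$. (In fact one can do a little better: if $f_u(u')=f_{u'}(u)=w$ then $u,u'$ would have been eliminated when forming $G^*$, so for each edge of $H$ at $w$ exactly one of its two ``half-assignments'' points to $w$; hence $\deg_H(w)\le |U_w|$ whenever $U_w$ contains a vertex of $U(d)$, and $\deg_H(w)\le 2|U_w|$ in general. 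But for the bound claimed we need more than a worst-case count.) So the real task is to show that, with probability $1-o(1)$, \emph{every} $w\in W$ has $|U_w|$ small enough — more precisely, that the \emph{effective} number of $H$-edges at $w$ is at most $(9/16+o(1))k$.

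The key point, and the reason the threshold $d=9k/8$ appears, is a case split on $\deg_{G^*}(w)=\deg(w)$ relative to $d$. \textbf{Case 1: $\deg(w)< d$.} Then $|U_w\cap U(d)|$ can be anything, but the vertices of $U_w$ not in $U(d)$ are chosen among the $n-\ell$ non-$U(d)$ vertices uniformly at random, $k-\ell$ of them; by Lemma~\ref{aux1}(i)--(ii) applied to the graph on the $n-\ell$ non-$U(d)$ vertices, the number of random vertices of $U$ landing in $N_{G^*}(w)$ is concentrated around $\deg(w)\cdot\frac{k-\ell}{n-\ell} \le \deg(w)\cdot\frac{k}{n}$ up to an additive $\sqrt{n\ln n}=o(k)$ error (here I use $n\ge 2k-1$, so $\sqrt{n\ln n}=o(k)$). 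Adding the at most $\ell\le k$... no: adding the deterministic part, $|U_w|\le |U(d)\cap N(w)| + \deg(w)\frac{k}{n}+o(k)$. Since $\deg(w)<d=9k/8$ and $n\ge 2k-1$, the random part is at most $\frac{9}{16}k+o(k)$. The deterministic part $|U(d)\cap N(w)|$ needs the elimination structure: for $u\in U_w\cap U(d)$ we have $U_w\setminus\{u\}\subset U_u$, i.e.\ all of $w$'s $G^*$-neighbours in $U$ are also $G^*$-neighbours of $u$, but that doesn't immediately bound $|U(d)\cap N(w)|$. The cleaner route: whenever $U_w$ meets $U(d)$, the refined count $\deg_H(w)\le|U_w|$ (not $2|U_w|$) kicks in, and $|U_w|=|U_w\cap U(d)|+|U_w\setminus U(d)|\le \deg(w) \le 9k/8$ — that's too weak. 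So Case 1 must be handled by: if $U_w\cap U(d)\ne\emptyset$, pick any such $u_0$; since $u_0\in U(d)$ the pairs at $w$ only contribute once each, and... I think the intended argument is that in Case 1 we get $\deg_H(w)\le 2|U_w\setminus U(d)| + |U_w\cap U(d)|$ where the first term is the random, concentrated one bounded by $2\deg(w)\frac{k}{n}+o(k)\le 2\cdot\frac{9k}{8}\cdot\frac{k}{2k}+o(k)=\frac{9k}{8}+o(k)$ — still too weak. The resolution is surely that a pair $u,u'$ with $f_u(u')=w=f_{u'}(u)$ is impossible after forming $G^*$ \emph{only when one endpoint is in $U(d)$}; for a pair with both endpoints random we separately note each such $w$ gets counted for both $u$ and $u'$, but there are few such pairs because... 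Let me not grind this; the structural identity $U_w\setminus\{u\}\subset U_u$ for $u\in U_w\cap U(d)$ is clearly meant to control the overlap, and combined with $\deg(u)=\deg_G(u)\ge d$ for such $u$ it should give the bound. \textbf{Case 2: $\deg(w)\ge d$.} Then $w$ would itself have qualified for $U(d)$; since $U(d)$ consists of (up to) the $k-\phi(k)$ highest-degree vertices, and... actually $w\in W$ means $w\notin U$, so either $|U(d)|=k-\phi(k)$ and $w$ was not among the chosen high-degree vertices, or there were $<k-\phi(k)$ vertices of degree $\ge d$ and then $w\in U(d)\subset U$, contradiction. So in Case 2 we have $|U(d)|=k-\phi(k)$, and then $|U_w|\le |N(w)\cap U| \le |N(w)| $, but more usefully $|U\setminus U(d)|=\phi(k)=o(k)$ so the random part of $U$ is negligible and $|U_w|\le |N_{G^*}(w)\cap U(d)|+o(k)$. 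Here the structural fact $U_w\setminus\{u\}\subset U_u$ for every $u\in U_w\cap U(d)$ forces all these high-degree vertices of $U$ adjacent to $w$ to have pairwise-containing neighbourhoods-in-$U$; the elimination process guarantees this chain, which bounds $\deg_H(w)$: indeed since $f_u$ is one-to-one \emph{and} each pair through $w$ with a $U(d)$ endpoint is counted once, $\deg_H(w)\le|U_w|$, and then the containment $U_w\setminus\{u\}\subset U_u$ together with $f_u:U_u^c\to N(u)\cap W$ one-to-one (so $|N(u)\cap W|\ge |U_u^c|$, i.e. $\deg(u)-|U_u|\ge k-|U_u|$, giving no info) — again the real content is the $9k/8$ threshold making $\frac{1}{2}\cdot\frac{9}{8}=\frac{9}{16}$.

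So the skeleton I would write is: (1) Fix $w\in W$ and split on $\deg(w)$ vs.\ $d=9k/8$. (2) In the low-degree case, use Lemma~\ref{aux1}(i)--(ii) (and Lemma~\ref{aux1}(iii) for the tiny-degree sub-case, applied to the neighbourhood structure among non-$U(d)$ vertices) to show the random part of $U_w$ is $\deg(w)\cdot\frac{k}{n}+o(k)\le\frac{9}{16}k+o(k)$, and use the elimination identity $U_w\setminus\{u\}\subset U_u$ together with the observation that edges of $H$ at $w$ coming from pairs with a $U(d)$-endpoint are counted with multiplicity one to absorb the $U(d)$-part into the same $\frac{9}{16}k+o(k)$. (3) In the high-degree case, note $|U(d)|=k-\phi(k)$ so $|U\setminus U(d)|=o(k)$, reducing to the deterministic part, which is again controlled by the elimination structure and the single-counting of $U(d)$-edges. (4) Union bound over the $\le n\le 2k$ choices of $w$, losing only an $o(1)$ probability via the $o(1/n)$ bounds in Lemma~\ref{aux1}. (5) Conclude $\Delta(H)=\max_w\deg_H(w)\le(\frac{9}{16}+o(1))k$ with probability $1-o(1)$.

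The main obstacle I anticipate is step (2)/(3): correctly accounting for the factor-of-two between ``$\deg_H(w)\le 2|U_w|$'' (naive) and ``$\deg_H(w)\le|U_w|$'' and pinning down exactly when each holds, since the whole improvement from Shannon's $3k_i/2$ to Vizing--Gupta's $\Delta+\mu$ hinges on $\Delta$ being close to $\frac{9}{16}k$ rather than $\frac{9}{8}k$. This is precisely where the definition of $U(d)$ via the degree threshold $d=9k/8$, and the elimination process that ensures $U_w\setminus\{u\}\subset U_u$ for $u\in U_w\cap U(d)$, must be used in tandem: the threshold makes the random contribution $\le\frac{9}{16}k$, while the elimination structure ensures that once $w$ sees even one high-degree vertex of $U$, its $H$-degree is governed by $|U_w|$ with no factor of two, and $|U_w|$ in turn is forced to be small because those neighbours all lie in the common over-neighbourhood of a vertex whose $W$-degree is tightly constrained by the one-to-one requirement on $f_u$. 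Getting this bookkeeping exactly right — rather than the routine concentration estimates — is the crux.
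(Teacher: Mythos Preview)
Your proposal has a genuine gap: you never use the randomness of the functions $f_u$, only the randomness of $U$. All of your attempts try to show $\deg_H(w)\le |U_w|\le (9/16+o(1))k$ directly, and you correctly discover this fails, since $|U_w|$ can be of order $(9/8)k$ when $w$ has many neighbours in $U(d)$. (Incidentally, $\deg_H(w)\le |U_w|$ holds unconditionally, not just when $U_w$ meets $U(d)$: each edge of $H$ at $w$ comes from exactly one $u\in U_w$ with $f_u^{-1}(w)$ defined, and $f_u$ is one-to-one. Your factor-of-two concern is a red herring.)

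The missing idea is this. For $u\in U_w\cap U(d)$, the elimination identity $U_w\setminus\{u\}\subset U_u$ tells you that $u$ has at least $|U_w|-1$ neighbours in $U$, hence at most $k-|U_w|$ non-neighbours there; meanwhile $u$ has degree at least $d$, so $|W\cap N_{G^*}(u)|\ge d-(|U_w|-1)$. Since $f_u$ is a \emph{uniformly random} injection of a set of size at most $k-|U_w|$ into a set of size at least $d+1-|U_w|$, the probability that $w$ lies in the image of $f_u$ is at most $\frac{k-|U_w|}{d+1-|U_w|}$. Summing over the $\ell_w$ vertices of $U_w\cap U(d)$ and adding the at most $|U_w|-\ell_w$ deterministic contributions from $U_w\setminus U(d)$, the expected degree of $w$ in $H$ is at most
\[
|U_w|-\ell_w+\ell_w\cdot\frac{k-|U_w|}{d+1-|U_w|}=|U_w|-\ell_w\cdot\frac{d-1-k}{d-1-|U_w|},
\]
and Chernoff gives concentration. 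This bound, together with your concentration bound on $|U_w|$ (which the paper states as $|U_w|\le \ell_w+(d-\ell_w)\frac{k-\ell}{2k-\ell-1}+o(k)$), feeds into a two-variable optimisation over $\alpha=\ell_w/k$ and $\gamma=|U_w|/k$ (with $\beta=\ell/k$ as a parameter), whose maximum is exactly $9/16$, attained at $\alpha=\beta=0$, $\gamma=\delta/2=9/16$. The case split in the paper is not on $\deg(w)$ versus $d$ but on whether $\ell<k-\phi(k)$ or $\ell=k-\phi(k)$; your ``Case~2'' collapses into the boundary $\beta=1$ of the same optimisation. So the crux you correctly flagged---``getting this bookkeeping exactly right''---is resolved not by combinatorics on $U_w$ alone but by exploiting that $f_u$ is random and rarely hits any fixed $w$ when $u$ has high degree.
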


\begin{proposition}\label{cl2}
With probability $1-o(1)$ the maximum multiplicity $\mu=\mu(H)$ of the edges of the multigraph $H$ satisfies 
$$\mu=o(k),\;\; \mbox{ as}\;\; k\rightarrow\infty.$$
\end{proposition}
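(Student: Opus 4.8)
\textbf{Plan for proving Propositions~\ref{cl1} and~\ref{cl2}.}

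The plan is to analyze the random multigraph $H$ by separately controlling the contribution to each vertex-degree $\deg_H(w)$ coming from the two ``types'' of vertices in $U$: the high-degree vertices in $U(d)$ (which are placed deterministically) and the vertices picked uniformly at random to fill out $U$. Recall an edge of $H$ at a vertex $w \in W$ corresponds to a nonadjacent pair $u,u' \in U$ with $f_u(u')=f_{u'}(u)=w$; by construction of $G^\ast$, any pair $u,u'$ with $u\in U(d)$ that are both neighbors of $w$ and nonadjacent has already been ``used up'' in the deterministic phase, so in $G^\ast$ no vertex $w$ has two neighbors in $U$ one of which is in $U(d)$ unless they are adjacent. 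Hence the edges of $H$ incident to $w$ come from pairs $u,u'$ that are \emph{random} vertices of $U$, or from the deterministically assigned pairs (each random vertex $w$ receives $f_u(u')=w$ from at most the number of $U(d)$-neighbors it had, which contributes $O(1)$ per such neighbor but these are not edges of $H$ unless the partner is also a neighbor — one must just bookkeep this carefully). The upshot I would establish is: for each $w\in W$, $\deg_H(w) \le \tfrac12\binom{|U_w^{\mathrm{rand}}|}{?}$-type bound, where $U_w^{\mathrm{rand}} = N_{G^\ast}(w)\cap (U\setminus U(d))$, plus lower-order terms; more precisely, each edge of $H$ at $w$ uses up two distinct half-edges $f_u(u')$ with $f_u(u')=w$, so $\deg_H(w) \le \tfrac12 |\{u\in U : f_u(\cdot) = w \text{ for some partner}\}|$. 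Since $f_u$ is a one-to-one map into $W\cap N_{G^\ast}(u)$, for a fixed $w$ the number of $u\in U$ with $w$ in the image of $f_u$ is at most $|U_w| = |N_{G^\ast}(w)\cap U|$. So $\deg_H(w) \le \tfrac12|U_w|$.

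\textbf{Step 1 (Proposition~\ref{cl1}): bounding $|U_w|$.} Since $\deg_H(w)\le \tfrac12|U_w|$, it suffices to show $|U_w| \le (\tfrac98+o(1))k$ for all $w\in W$ with probability $1-o(1)$. Split $U_w = (U_w\cap U(d)) \cup (U_w \cap (U\setminus U(d)))$. For the random part: $w$ has $\deg_{G^\ast}(w) \le \deg_G(w) \le n \le 2k$ (trivially), but we need the sharper input. Apply Lemma~\ref{aux1}(i)--(ii) with $F = G^\ast[V\setminus U(d)]$ (restricted to the vertices from which the random $k-\ell$ vertices of $U$ are chosen): the expected number of random-$U$ neighbors of $w$ is $(k-\ell)\cdot \deg(w \text{ into } V\setminus U(d))/(n-\ell)$, and this concentrates up to $\sqrt{n\ln n} = o(k)$ additive error by Chernoff (hypergeometric $\le$ binomial), union-bounded over $\le n$ vertices $w$. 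The deterministic part contributes at most $\ell \le k$. Now I must bound $\deg_{G^\ast}(w)$. If $w\notin U(d)$ had degree $< d = \tfrac98k$ in $G$ in the case where $U(d)$ was the exact set of high-degree vertices, then $\deg_{G^\ast}(w)\le \deg_G(w) < \tfrac98k$, giving $|U_w| \le \tfrac98 k + o(k)$ directly. In the other case ($|U(d)| = k-\phi(k)$, i.e.\ $U(d)$ is a size-$(k-\phi(k))$ truncation of a larger high-degree set) one argues instead that almost all of $U$ is $U(d)$, so $U\setminus U(d)$ has size $\phi(k) = o(k)$, hence the random part of $|U_w|$ is $o(k)$ and $|U_w|\le \ell + o(k) \le k + o(k) \le (\tfrac98+o(1))k$. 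Combining, $\deg_H(w) \le \tfrac12|U_w| \le (\tfrac{9}{16}+o(1))k$ for all $w$, as required. (The choice $d = \tfrac98 k$ is exactly what balances these two regimes.)

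\textbf{Step 2 (Proposition~\ref{cl2}): bounding the multiplicity $\mu$.} A multiple edge in $H$ at the pair $(w,w')$ means there are two distinct nonadjacent pairs $(u_1,u_1'),(u_2,u_2')$ in $U$ with $\{f_{u_1}(u_1'),f_{u_1'}(u_1)\} = \{f_{u_2}(u_2'),f_{u_2'}(u_2)\} = \{w,w'\}$. Since each $f_u$ is one-to-one, for a fixed $w$ there is at most one $u'$ with $f_u(u')=w$, so the data $(u_1,u_1')$ with $\{f_{u_1}(u_1'),f_{u_1'}(u_1)\}=\{w,w'\}$ is essentially determined by which vertex of $U$ plays the role mapping to $w$ versus $w'$; multiplicity $\ge t$ at $(w,w')$ requires roughly $t$ distinct such ``crossing'' configurations. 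I would show $\mu = o(k)$ by a first-moment / concentration argument: condition on $U$ (for which by Lemma~\ref{aux1}(ii) every $\deg_{G^\ast}(w)$ and every $|U_w|$ is tightly concentrated), then the $f_u$ are chosen as independent uniform one-to-one maps. Fix $w, w'$; the number of $u\in U$ that could possibly map to $w$ (i.e.\ $w\in N_{G^\ast}(u)$) is $|U_w| = O(k)$ out of a domain/codomain of size $\Theta(k)$, so the probability that a \emph{specific} pair $(u,u')$ has $f_u(u') = w$ and $f_{u'}(u) = w'$ is $O(1/k^2)$; the expected number of such configurations is $O(|U_w|\cdot|U_{w'}|/k^2) = O(1)$, and one gets $o(k)$-type tail bounds via Azuma-Hoeffding applied to a suitable vertex-exposure martingale on the choices of the $f_u$'s (each $f_u$ affects $\mu$ by at most $O(1)$, and there are $O(k)$ of them, giving concentration window $O(\sqrt k \log k) = o(k)$), then union bound over the $O(k^2)$ pairs $(w,w')$. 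The key structural input that keeps things tame is that $U(d)$-neighbors of any $w$ were removed in forming $G^\ast$, so the random maps $f_u$ that matter are over genuinely random vertices $u$, and Lemma~\ref{aux1}(iii) (with the role of $S$ as the random part of $U$ and $F$ having max degree between $s$ and $n^{.9}$) ensures that in the problematic intermediate-degree regime, each $w$ has only $o(k)$ such neighbors — which is precisely why the footnote's truncation to keep $\ge \phi(k)$ random vertices in $U$ is needed.

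\textbf{Main obstacle.} The hardest part is not Proposition~\ref{cl1} (which reduces to $\deg_H(w)\le\tfrac12|U_w|$ plus the Chernoff bound of Lemma~\ref{aux1}) but rather establishing Proposition~\ref{cl2} cleanly, because the multiplicity depends on correlations between the randomly chosen one-to-one maps $f_u$ and $f_{u'}$, and because the degree of $w$ in $G^\ast$ — hence $|U_w|$ — is itself random and can be as large as $\Theta(k)$. Getting the dependence right requires carefully conditioning on $U$ first (using Lemma~\ref{aux1}(ii),(iii) to fix $|U_w|$ and $\deg_{G^\ast}(w)$ up to $o(k)$ for all $w$ simultaneously), then setting up a bounded-difference martingale over the sequential choice of the $f_u$'s so that Azuma-Hoeffding yields $\mu = o(k)$ after a union bound over pairs $(w,w')$. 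The two-case split in the construction of $U(d)$ (driven by the threshold $d = \tfrac98 k$ and the slack $\phi(k)$) is the device that makes both propositions go through, and verifying that it does so in the boundary cases is where the bookkeeping is most delicate.
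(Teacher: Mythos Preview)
Your overall architecture for Proposition~\ref{cl2} --- condition on $U$, bound the expected multiplicity of a fixed pair $(w,w')$, apply Azuma to the exposure of the $f_u$'s, then union bound --- is exactly the paper's strategy. However, two of the quantitative inputs you assert are false, and without them the argument does not close.

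First, the claim that the codomain $W\cap N_{G^\ast}(u)$ has size $\Theta(k)$ is unjustified and in general wrong: a vertex $u\in U$ has degree at least $k-1$ but could have almost all of its neighbors inside $U$, leaving very few in $W$. The paper does \emph{not} get $\Theta(k)$; it shows only that with probability $1-o(1)$ every $u\in U$ has at least $\phi(k)/4$ neighbors in $W$, and this requires the truncation $\ell \le k-\phi(k)$ (so that each vertex has at least $\phi(k)-1$ neighbors in $V\setminus U(d)$) together with a Chernoff bound. Consequently the per-pair probability is $O(\phi(k)^{-2})$, not $O(k^{-2})$, and the expected multiplicity is $O(k^{0.2})$, not $O(1)$. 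You gesture at the role of the truncation in your last sentence, but your actual computation does not use it.

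Second, and more seriously, ``union bound over the $O(k^2)$ pairs $(w,w')$'' fails because $|W|=n-k$ is unbounded in Lemma~\ref{lem:fix}: the hypothesis is only $n\ge 2k-1$, with no upper bound, and the $o(1)$ in the conclusion must depend only on $k$. The paper circumvents this by first observing (after the range-truncation of each $f_u$ to size at most $d$) that at most $kd/\phi(k)\le 2k^{1.1}$ vertices of $W$ can have $\ge\phi(k)$ neighbors in $U$; for any pair $(w,w')$ with $w$ outside this small set, the multiplicity is trivially below $\phi(k)$ since $\deg_H(w)\le |U_w|<\phi(k)$. Only the $O(k^{2.2})$ remaining pairs need the Azuma argument. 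Your plan has no mechanism to reduce to polynomially-in-$k$ many pairs.

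As an aside, your sketch for Proposition~\ref{cl1} contains a factor-of-two error: an edge of $H$ incident to $w$ consumes \emph{one} assignment $f_u(u')=w$, not two, so the bound is $\deg_H(w)\le |U_w|$, not $\tfrac12|U_w|$. With the correct inequality your argument yields only $\Delta\le(\tfrac98+o(1))k$, which is off by a factor of two. The paper's proof of Proposition~\ref{cl1} is substantially more delicate: it exploits the structure of $G^\ast$ (namely that neighbors of $w$ in $U(d)$ are pairwise adjacent in $G^\ast$) to bound, for each such neighbor $u$, the probability that $w$ lies in the image of $f_u$, and then solves a constrained optimization in the parameters $\ell_w,\ell,|U_w|$ to obtain the $9/16$.
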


In the above two propositions, the $o(1)$ terms tend to $0$ as $k \to \infty$. 

\begin{proof}[Proof of Proposition~\ref{cl1} (Analyzing $\Delta$)] 
Recall that $U(d)$ denotes the set of vertices of $G$ with degree at least $d$, and $|U(d)|=\ell.$
\smallskip

\textbf{Case I:} Suppose  $G$ has less than $k-\phi(k)$ vertices of degree at least $d$.  Then we have $\ell < k-\phi(k)$, where $\phi(k)=k^{.9}.$  In this case, we know that the vertices not in $U(d)$ have degree less than $d$, and this will only be used in establishing the upper bound on $|U_w|$ below. 

%\janos{I lost track: where do we use this assumption?} \jacob{Good question. We are using this assumption in the upper bound for (4) below, the second term is coming from the assumption that each vertex not in $U(d)$ has degree less than $d$.}

%First, we give an upper bound on the expected degree of each vertex in $H$. Next, we prove that the degree is highly concentrated around its expected value. A union bound then shows that the probability that there is a vertex whose degree sharply deviates from its expected degree is small. 

For each vertex $w \in V \setminus U(d)$, let $\ell_w$ be the number of neighbors of $w$ in $U(d)$. As $|U(d)|=\ell$, we have $0 \leq \ell_w \leq \ell < k-\phi(k) \leq k$. The proof easily follows Claims \ref{firstpropinproof}, \ref{secondpropinproof}, and \ref{thirdpropinproof}, whose proofs we defer for now.

\begin{claim}\label{firstpropinproof}
With probability $1-o(1)$, for each vertex $w \in V \setminus U(d)$, its number of neighbors in $U$ satisfies 
\begin{equation}\label{uwbound2} |U_w| \leq \ell_w+(d-\ell_w)\left(\frac{k-\ell}{2k-\ell-1} \right)+o(k).\end{equation}    
\end{claim}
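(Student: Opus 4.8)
The plan is to analyze, for a fixed vertex $w\in V\setminus U(d)$, the conditional distribution of $|U_w|=|N_{G^\ast}(w)\cap U|$ given the construction, and to show concentration via a Chernoff-type argument. First I would fix the set $U(d)$ and recall that the remaining $k-\ell$ vertices of $U$ are chosen uniformly at random from the $n-\ell$ vertices of $V\setminus U(d)$. The quantity I want to bound is the number of neighbors of $w$ (in $G^\ast$, but I will first argue in terms of $G$, since $\deg_{G^\ast}(w)\le\deg_G(w)$) that end up in $U$. The $\ell_w$ neighbors of $w$ lying in $U(d)$ are in $U$ automatically, contributing the additive $\ell_w$ term. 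Among the remaining neighbors of $w$ — there are at most $\deg_G(w)-\ell_w\le d-\ell_w$ of them, since $w\notin U(d)$ has degree less than $d$ — each lies in $V\setminus U(d)$ and is included in $U$ with probability $\frac{k-\ell}{n-\ell}$. Here I would invoke the bound $n\ge 2k-1$, so $n-\ell\ge 2k-1-\ell$, giving inclusion probability at most $\frac{k-\ell}{2k-1-\ell}$, which matches the $\frac{k-\ell}{2k-\ell-1}$ factor in \eqref{uwbound2}.

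Next I would handle the passage from $G$ to $G^\ast$: the cleanup operation only deletes edges incident to $W$ and adds edges inside $U$, so it can only decrease $|N_{G^\ast}(w)\cap$ (anything)$|$ for $w\in W$; in particular $U_w$ computed in $G^\ast$ is at most the number of neighbors of $w$ in $G$ that land in $U$. (One subtlety: whether a given vertex lies in $U$ or in $W$, hence whether cleanup operates on it, is itself random; but since cleanup only removes edges at $W$-vertices, the bound $|U_w|\le(\text{neighbors of }w\text{ in }G\text{ that are in }U)$ holds deterministically conditioned on the choice of $U$, so there is no circularity.) Thus it suffices to bound the number of $G$-neighbors of $w$ outside $U(d)$ that get selected into $U$.

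Then comes the concentration step. Conditioned on $U(d)$, the selection of the random part of $U$ is a uniform $(k-\ell)$-subset of an $(n-\ell)$-set, so the count of selected neighbors of $w$ among its at most $d-\ell_w$ non-$U(d)$ neighbors has a hypergeometric distribution with mean at most $(d-\ell_w)\cdot\frac{k-\ell}{2k-\ell-1}$. By the remark following Lemma~\ref{Azuma} (hypergeometric is at least as concentrated as binomial), Chernoff's bound Lemma~\ref{Azuma}(i) applies with $t=\sqrt{n\ln n}$, giving deviation probability $O(n^{-2})$ for a single $w$; a union bound over the at most $n$ choices of $w\in V\setminus U(d)$ yields the claim with probability $1-o(1)$, with the $\sqrt{n\ln n}=o(k)$ slack absorbed into the $o(k)$ term (using $n=O(k)$, which follows from $n\le n_i-k_i+1+k_i\le$ the standing hypothesis bounds, or more simply from $n_i\ge 2k_i-1$ together with $n_i\le 2n_i-k_i+\dots$; in any case $n=\Theta(k)$ here since we are in the regime of Lemma~\ref{lem:fix} where $n\ge 2k-1$ and $n$ is not much larger).

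The main obstacle I anticipate is not the concentration itself but getting the conditioning right: one must be careful that the bound "$\deg_G(w)<d$ for $w\notin U(d)$" is only available in Case~I, and that the event "$w\in W$" is not independent of the randomness governing which neighbors of $w$ are chosen. The clean way around this is to condition on the entire random choice of $U$ first, observe the deterministic inequality $|U_w|\le \ell_w + |\{v\in N_G(w)\setminus U(d): v\in U\}|$, and only then take expectations and apply concentration to the right-hand side — treating $U(d)$ as fixed and the rest of $U$ as the uniform random subset. A secondary point to check is that when $\ell=|U(d)|$ could be as large as $k-\phi(k)$, the factor $\frac{k-\ell}{2k-\ell-1}$ is still well-behaved (it is at most $\frac12$ and tends to $0$ as $\ell\to k$), and that $d-\ell_w\ge 0$, so the bound is never vacuous; these are routine but worth a sentence.
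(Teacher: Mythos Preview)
Your approach matches the paper's almost exactly: fix $U(d)$, view the remaining $k-\ell$ members of $U$ as a uniform random $(k-\ell)$-subset of the $(n-\ell)$-set $V\setminus U(d)$, bound $|U_w|$ by $\ell_w$ plus a hypergeometric count with mean at most $(d-\ell_w)\frac{k-\ell}{n-\ell}$, apply Chernoff with deviation $\sqrt{n\ln n}$, and union-bound over all $w$. Your handling of $G$ versus $G^\ast$ is fine (arguably cleaner than the paper's, which defines $F$ inside $G^\ast$ even though $G^\ast$ depends on $U$; bounding via $G$ as you do avoids that circularity).

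There is, however, a genuine gap. You write that $\sqrt{n\ln n}=o(k)$ ``using $n=O(k)$'' and then offer several attempted justifications, none of which hold: the hypotheses of Lemma~\ref{lem:fix} are only that $n\ge 2k-1$ and that the minimum degree is at least $k-1$; there is \emph{no} upper bound on $n$ in terms of $k$. (You may be conflating the global $n,k$ of Theorem~\ref{main1} with the local $n_i,k_i$ now renamed $n,k$; inside Lemma~\ref{lem:fix} the ratio $n/k$ can be arbitrarily large.) So for, say, $n=k^3$, your error term $\sqrt{n\ln n}$ swamps the main term and the argument fails outright.

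The paper closes this gap with a short case split. If $n\le d^{1.9}$ (recall $d=9k/8$), then $\sqrt{n\ln n}=o(k)$ and your argument goes through; moreover the bound $(d-\ell_w)\frac{k-\ell}{n-\ell}$ is decreasing in $n$, so one may replace $n$ by its minimum value $2k-1$ to obtain the stated denominator $2k-\ell-1$. If instead $n>d^{1.9}$, then $d\le n^{0.9}$ and the paper invokes Lemma~\ref{aux1}(iii) (applied to the graph on $V\setminus U(d)$, which has maximum degree at most $d$) to conclude directly that with probability $1-o(1)$ every $w$ has only $o(k-\ell)=o(k)$ neighbors in the random part of $U$, giving $|U_w|\le \ell_w+o(k)$, which is stronger than \eqref{uwbound2}. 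Adding this two-line case distinction would complete your proof.
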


\begin{claim}\label{secondpropinproof}
With probability $1-o(1)$, for each vertex $w \in W$, its degree in $H$ is at most $k/4$ or at most 
\begin{equation}\label{degreeexpect1}|U_w|-\ell_w\left(\frac{d-1-k}{d-1-|U_w|}\right)+o(k).
\end{equation}
\end{claim}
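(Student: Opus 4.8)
The plan is to estimate, for a fixed vertex $w \in W$, the degree $d_H(w)$ of $w$ in the semi-random multigraph $H$, and to show it concentrates around the quantity in \eqref{degreeexpect1}. Recall how an edge of $H$ gets an endpoint at $w$: it arises either (a) from the deterministic phase, when some triple $(w,u,u')$ with $u \in U(d)$, $u'\in U$, $u,u'$ nonadjacent, neighbors of $w$, is processed, contributing one edge of $H$ with an endpoint at $w$ (here $w=f_u(u')=f_{u'}(u)$); or (b) from the random phase, when for some $u \in U$ the randomly chosen one-to-one map $f_u$ sends some $u' \in U\setminus(N_{G^*}(u)\cup\{u\})$ to $w \in W\cap N_{G^*}(u)$, and $f_{u'}(u)\neq w$. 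First I would bound the contribution of type (a): each such edge consumes two of the $\ell_w$ edges from $w$ to $U(d)$ (since $u\in U(d)$), so at most $\ell_w/2 \le \ell_w$ edges of type (a) touch $w$; moreover, each time such a triple is processed the degree of $w$ in $G^*$ drops by $2$, so after the deterministic phase $\deg_{G^*}(w) = \deg_G(w) - 2(\text{number of (a)-edges at }w)$, and in particular $|U_w| = |N_{G^*}(w)\cap U|$ has already been controlled in Claim~\ref{firstpropinproof}.

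Next I would handle type (b), which is the main term. For $u \in U_w$ (i.e. $u$ a neighbor of $w$ in $G^*$), the random map $f_u$ hits $w$ with probability roughly $\big(|U\setminus(N_{G^*}(u)\cup\{u\})|\big)/\big(|W\cap N_{G^*}(u)|\big) = (k - 1 - |U_u|)/(\deg_{G^*}(u) - |U_u|)$, since $f_u$ is a uniformly random one-to-one map from a set of size $k-1-|U_u|$ into a set of size $\deg_{G^*}(u)-|U_u| = \deg_G(u)-|U_u|$, and each target is equally likely to be $w$. Summing over $u \in U_w$ gives the expected number of type-(b) half-edges at $w$, and one must subtract the "collision" correction for pairs where $f_u(u')=f_{u'}(u)=w$ coincide so that only one edge (not two) is created — but since for each $u'$ this happens with probability $O(1/\deg_{G^*}(u'))=o(1)$ and there are at most $k$ such $u'$, this correction is $o(k)$ and absorbable. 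The delicate point is that for $u \in U(d)\cap U_w$ the relevant probability $(k-1-|U_u|)/(\deg_G(u)-|U_u|)$ is controlled because $\deg_G(u)\ge d$; writing $|U_u|$ in terms of $|U_w|$ via the structural fact $U_w\setminus\{u\}\subseteq U_u$ (so $|U_u|\ge |U_w|-1$) and $|U_u|\le k$, one gets that each of the $\ell_w$ neighbors $u\in U(d)$ contributes at most $(k-1-|U_w|+1)/(d-1-|U_w|) \le 1 - \frac{d-1-k}{d-1-|U_w|}$ less than a full unit relative to counting it with weight $1$; so the $\ell_w$ high-degree neighbors contribute at most $\ell_w\big(1 - \frac{d-1-k}{d-1-|U_w|}\big)$, while the remaining $|U_w|-\ell_w$ neighbors contribute at most $1$ each, for a total of at most $|U_w| - \ell_w\frac{d-1-k}{d-1-|U_w|}$, which is exactly \eqref{degreeexpect1} up to the $o(k)$ slack. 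The alternative bound $d_H(w)\le k/4$ in the statement covers the regime where $|U_w|$ is small enough that the linear-algebra bound above is already beaten by the trivial estimate; I would dispatch that case by noting $d_H(w) \le |U_w| \le k/4$ when $|U_w|$ is small, and otherwise use the main estimate.

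Finally, the concentration. The quantity $d_H(w)$ is a function of the random choices: the random part of $U$ (a uniform $(k-\ell)$-subset of $n-\ell$ vertices) and, conditioned on $U$, the independent uniform random one-to-one maps $\{f_u\}_{u\in U}$. I would expose these in a bounded-difference martingale: changing one element of the random part of $U$ changes $d_H(w)$ by $O(k^{.9})$ (a vertex swap affects at most the edges through one new/old branch vertex, and by Lemma~\ref{aux1}(iii) the degrees into the random part are $o(k)$, in fact $O(k^{.9})$), and re-choosing a single $f_u$ changes $d_H(w)$ by at most $2$. With $k-\ell\le k$ terms of Lipschitz constant $O(k^{.9})$ and $|U|=k$ terms of Lipschitz constant $2$, Azuma--Hoeffding (Lemma~\ref{Azuma}(ii)) gives deviation $O(k^{.95}\sqrt{\log k}) = o(k)$ except with probability $o(1/n)$; a union bound over the $n$ vertices $w\in W$ finishes it.

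The main obstacle, as I see it, is the bookkeeping in type (b): correctly accounting for the interaction between the deterministic phase (which both reduces $\deg_{G^*}(w)$ and pre-commits some values $f_u(u')$) and the random phase, in particular making sure the structural inclusion $U_w\setminus\{u\}\subseteq U_u$ is used correctly to turn the per-$u$ hitting probabilities into the clean closed form \eqref{degreeexpect1}, and verifying that all the lower-order collision and swap corrections genuinely fit into the $o(k)$ budget rather than, say, an $O(k^{.9}\log k)$ budget that would still be fine — but one has to check it.
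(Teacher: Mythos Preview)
Your expectation calculation is along the right lines, and the structural inclusion $U_w\setminus\{u\}\subseteq U_u$ for $u\in U_w\cap U(d)$ is exactly what drives the bound. Two small corrections first: triples processed in the deterministic phase do \emph{not} contribute edges to $H$ at all, since there one sets $f_u(u')=f_{u'}(u)=w$ and the definition of $H$ explicitly excludes pairs with $f_u(u')=f_{u'}(u)$; so your ``type (a)'' term is zero, not $\le\ell_w$. Also, the split into ``$\ell_w$ high-degree neighbours'' versus ``$|U_w|-\ell_w$ others'' is slightly off, since the relevant count is $|U_w\cap U(d)|\le\ell_w$, not $\ell_w$ itself; this matches a sloppiness in the paper and is harmless at the $o(k)$ level.

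The real gap is in your concentration step. First, the Lipschitz constant $O(k^{.9})$ for swapping one vertex in the random part of $U$ is not justified: such a swap propagates through the entire deterministic phase (every triple with that vertex as $u'$), altering $G^*$ by up to order $k$ edges and hence potentially changing $d_H(w)$ by order $k$. Second, and more importantly, even if you drop the $U$-steps and run Azuma only over the independent $f_u$'s (Lipschitz $2$ each, $k$ of them), the failure probability you get is $\exp(-t^2/O(k))$; to union-bound over the $n-k$ vertices of $W$ with $t=o(k)$ you would need $\log(n-k)=o(k)$, but Lemma~\ref{lem:fix} places no upper bound on $n$. Your claimed deviation $O(k^{.95}\sqrt{\log k})$ with failure $o(1/n)$ does not follow from the stated bounded differences.

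The paper fixes exactly this by a device you are missing: for each $u\in U$ of degree at least $d$, it views the choice of $f_u$ as first picking a uniform random \emph{reduced range} of size $d-|U_u|\le d$ inside $W\cap N_{G^*}(u)$, and only then the random injection. After this first step the total size of all reduced ranges is at most $kd$, so at most $4d<5k$ vertices $w$ lie in more than $k/4$ of them; call this set $T$. Every $w\notin T$ automatically has $d_H(w)\le k/4$, which is the true role of the ``$k/4$ or \ldots'' alternative (not merely a trivial bound for small $|U_w|$). For the at most $5k$ vertices $w\in T$, one applies Chernoff to the independent indicators $\mathbf{1}[w\in\mathrm{im}(f_u)]$ (conditioned on the first step) and union-bounds over $|T|=O(k)$, for which deviation $\sqrt{k\ln k}=o(k)$ suffices. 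This reduction from a union over $W$ to a union over $T$ is the missing idea.
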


We use Claim \ref{secondpropinproof} to give an upper bound on the maximum degree of $H$. Note that also we have some constraints on the variables that appear in the upper bound on the degree of $w$. Namely, Claim \ref{firstpropinproof} gives the upper bound (\ref{uwbound2}) on $|U_w|$, and we already observed that $0 \leq \ell_w \leq \ell \leq k$.

It is convenient to parametrize by normalizing every variable by dividing it by $k$ and by ignoring the lower order terms, as they do not have an asymptotic effect. So with no danger of confusion, we delete all $o(1)$ terms after dividing by $k$.  In what follows, we compute the asymptotics as $k \to \infty$. So, set \[\alpha=\ell_w/k,\;\;\;\beta=\ell/k,\;\;\; \gamma=|U_w|/k,\;\;\; \delta=d/k=9/8.\] 
We need to check that the maximum degree of $H$ at most $(9/16+o(1))k$. We accomplish this by using Claim \ref{secondpropinproof} to check that each vertex $w$ of $H$ has degree at most $(9/16+o(1))k$. As $1/4 < 9/16$, each vertex $w$ of degree at most $k/4$ in $H$ is fine. For each vertex $w$ of degree more than $k/4$ in $H$, the upper bound (\ref{degreeexpect1}) on the degree of $w$ in $H$ becomes, asymptotically as $k \to \infty$, $k$ times 
\begin{equation}\label{degreeexpectgreek1}\gamma-\alpha\left(\frac{\delta-1}{\delta-\gamma}\right).
\end{equation}
The (asymptotic values of the) variables satisfy the constraints $0 \leq \alpha \leq \beta \leq 1$, and $0 \leq \gamma \leq \alpha+(\delta-\alpha)(1-\beta)/(2-\beta)$, with this last inequality coming from (\ref{uwbound2}). The following claim then completes the proof in this case. 

\begin{claim}\label{thirdpropinproof}
The maximum value of (\ref{degreeexpectgreek1}), under the conditions $0 \leq \alpha \leq \beta \leq 1$ and $0 \leq \gamma \leq \alpha+(\delta-\alpha)(1-\beta)/(2-\beta)$, is $9/16$. 
\end{claim}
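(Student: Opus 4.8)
The plan is to treat Claim~\ref{thirdpropinproof} as a small constrained optimization problem in three real variables $\alpha,\beta,\gamma$ with $\delta=9/8$ fixed, and to exploit the monotonicity structure of the objective $g(\alpha,\gamma)=\gamma-\alpha\frac{\delta-1}{\delta-\gamma}$ to reduce it to a one-variable maximization. First I would record the feasible region: $0\le\alpha\le\beta\le1$ and $0\le\gamma\le \alpha+(\delta-\alpha)\frac{1-\beta}{2-\beta}$. Since $\beta$ appears only in the upper bound for $\gamma$ and $\frac{1-\beta}{2-\beta}$ is decreasing in $\beta$ while its coefficient $\delta-\alpha>0$ (as $\alpha\le1<\delta$), the $\gamma$-constraint is loosest when $\beta$ is as small as possible; but $\beta\ge\alpha$, so the binding case is $\beta=\alpha$, giving $\gamma\le \alpha+(\delta-\alpha)\frac{1-\alpha}{2-\alpha}$. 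So it suffices to maximize $g(\alpha,\gamma)$ over $0\le\alpha\le1$ and $0\le\gamma\le h(\alpha):=\alpha+(\delta-\alpha)\frac{1-\alpha}{2-\alpha}$.

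Next I would observe that $g$ is increasing in $\gamma$ on the relevant range: $\partial_\gamma g = 1-\alpha\frac{\delta-1}{(\delta-\gamma)^2}$, and since $\gamma\le h(\alpha)\le 1 < \delta$ one checks $(\delta-\gamma)^2\ge(\delta-1)^2$, and $\alpha(\delta-1)/(\delta-1)^2=\alpha/(\delta-1)=8\alpha\le 8$ — this crude bound is not immediately $\le 1$, so instead I would argue more carefully that at the optimum $\gamma$ is pushed to its upper endpoint $\gamma=h(\alpha)$. The clean way: for fixed $\alpha$, $g(\alpha,\cdot)$ is a smooth function of $\gamma$ on $[0,h(\alpha)]$ whose derivative $1-\frac{\alpha(\delta-1)}{(\delta-\gamma)^2}$ is decreasing in... wait, it is \emph{increasing} in $\gamma$ on $[0,\delta)$, so $g(\alpha,\cdot)$ is convex, hence its maximum over the interval is attained at an endpoint; comparing $g(\alpha,0)=0$ with $g(\alpha,h(\alpha))$, and noting $g(\alpha,h(\alpha))\ge 0$ whenever it exceeds $0$ which is the only case we care about (degrees below $k/4<\,9k/16$ are already fine, and any $w$ with $g\le 0$ is certainly fine), we may set $\gamma=h(\alpha)$.

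Then the problem collapses to maximizing the single-variable function
\begin{equation}\label{eq:onevar}
\psi(\alpha)=h(\alpha)-\alpha\cdot\frac{\delta-1}{\delta-h(\alpha)},\qquad h(\alpha)=\alpha+(\delta-\alpha)\frac{1-\alpha}{2-\alpha},\qquad \delta=\tfrac98,
\end{equation}
over $\alpha\in[0,1]$. I would simplify $\delta-h(\alpha)=(\delta-\alpha)\big(1-\frac{1-\alpha}{2-\alpha}\big)=(\delta-\alpha)\frac{1}{2-\alpha}$, so that the subtracted term becomes $\alpha(\delta-1)\frac{2-\alpha}{\delta-\alpha}$, and $\psi(\alpha)=\alpha+(\delta-\alpha)\frac{1-\alpha}{2-\alpha}-\alpha(\delta-1)\frac{2-\alpha}{\delta-\alpha}$. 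This is a rational function with no poles on $[0,1]$ (denominators $2-\alpha\ge1$ and $\delta-\alpha\ge\delta-1=\tfrac18>0$); I would clear denominators, differentiate, and check that $\psi$ attains maximum value $9/16$. A natural guess to verify is $\alpha=0$: then $h(0)=\delta\cdot\frac12=\tfrac{9}{16}$ and $\psi(0)=\tfrac{9}{16}-0=\tfrac{9}{16}$, which already meets the target, so the real content is showing $\psi(\alpha)\le\tfrac{9}{16}$ for all $\alpha\in[0,1]$, equivalently $\tfrac{9}{16}-\psi(\alpha)\ge0$. After clearing the denominator $(2-\alpha)(\delta-\alpha)>0$, this reduces to checking a low-degree polynomial in $\alpha$ (degree at most $3$) is nonnegative on $[0,1]$, which can be done by factoring out the obvious root structure or by a direct sign analysis of its derivative. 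The main obstacle is purely this final polynomial positivity check — making sure the algebra is done correctly and that no boundary case (in particular $\alpha=1$, where $h(1)=1$ and $\psi(1)=1-\frac{\delta-1}{\delta-1}=0$, comfortably below $9/16$) is overlooked; conceptually there is nothing deep, but the reduction from three variables to one via the two monotonicity/convexity observations is where care is needed to justify that the extremal configuration is indeed $\beta=\alpha$ and $\gamma=h(\alpha)$.
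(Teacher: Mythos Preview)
Your reduction $\beta=\alpha$ is correct, and the simplification $\delta-h(\alpha)=(\delta-\alpha)/(2-\alpha)$ is useful. But the central step --- forcing $\gamma$ to the right endpoint $h(\alpha)$ --- rests on a sign error. You write that $\partial_\gamma g = 1 - \alpha(\delta-1)/(\delta-\gamma)^2$ is \emph{increasing} in $\gamma$, hence $g(\alpha,\cdot)$ is convex. In fact, as $\gamma\uparrow\delta$ the quantity $(\delta-\gamma)^{-2}$ \emph{increases}, so $\partial_\gamma g$ \emph{decreases}: $g(\alpha,\cdot)$ is concave, not convex. (Your first instinct, before the ``wait,'' was right.) Consequently the maximum over $\gamma\in[0,h(\alpha)]$ can lie at the interior critical point $\gamma^*=\delta-\sqrt{\alpha(\delta-1)}$, and for large $\alpha$ it does: e.g.\ at $\alpha=1$ one has $h(1)=1$ but $\gamma^*=\tfrac98-\tfrac{1}{2\sqrt2}\approx 0.77$, with $g(1,\gamma^*)=\tfrac98-\sqrt{1/2}\approx 0.42 > 0 = g(1,1)=\psi(1)$. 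So proving $\psi(\alpha)\le 9/16$ is not sufficient; the interior regime is simply missing from your argument. (Incidentally, $g(\alpha,0)=-\alpha(\delta-1)/\delta$, not $0$.)

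The paper sidesteps this by using the \emph{other} monotonicity: the objective is linear and strictly decreasing in $\alpha$ (derivative $-(\delta-1)/(\delta-\gamma)<0$), so for fixed $\beta,\gamma$ one pushes $\alpha$ down until either $\alpha=0$ (then the value is $\gamma\le \delta/2=9/16$) or the $\gamma$-constraint binds; the binding case is then a one-variable concave problem after the substitution $\eta=1/(2-\beta)$. If you want to salvage your route, you must also handle the regime $\gamma=\gamma^*\le h(\alpha)$, where the objective equals $\delta-2\sqrt{\alpha(\delta-1)}=\tfrac98-\sqrt{\alpha/2}$; this is decreasing in $\alpha$, so it suffices to check that at the smallest $\alpha$ for which $\gamma^*\le h(\alpha)$ (equivalently $(\delta-\alpha)/(2-\alpha)\le\sqrt{\alpha/8}$) the value is already $\le 9/16$.
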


\begin{proof}[Proof of Claim \ref{firstpropinproof}]
Recall that $G^{\ast}$ denotes the graph constructed from $G$ using the semi-random process described at the beginning of the proof of Lemma~\ref{lem:fix}. Let $F$ denote the subgraph of $G^{\ast}$ induced by the set $V \setminus U(d)$, so $F$ has $n-\ell$ vertices. Note that $S:=U \setminus U(d)$ is a uniformly random subset of $s=k-\ell$ vertices from $F$. By Lemma \ref{aux1}, the expected number of neighbors of $w$ in $S$ is at most $(d-\ell_w)s/(n-\ell)$, and with probability $1-o(1)$, no $w$ exceeds its expected number of neighbors in $S$ by more than $\sqrt{n\ln n}$. Hence, with probability $1-o(1)$, for each vertex $w \in V \setminus U(d)$, its number of neighbors in $U$ satisfies 
\begin{equation}\label{uwbound} |U_w| \leq \ell_w+(d-\ell_w)\left(\frac{k-\ell}{n-\ell} \right)+\sqrt{n\ln n}.\end{equation}
If we are in the case $n \leq d^{1.9}$, we have $\sqrt{n\log n}=o(k)$. Also observe that $n \geq 2k-1$ and the upper bound on $|U_w|$ in (\ref{uwbound}) is decreasing as a function of $n$ and, hence, is maximized if $n=2k-1$. Otherwise,  we have $n > d^{1.9}$, and by Lemma \ref{aux1}(iii) applied to $F$, with probability $1-o(1)$, every vertex $w \in V \setminus U(d)$ has $|U_w|\leq \ell_w+o(k)$ neighbors in $U$. In either case, the desired upper bound (\ref{uwbound2}) on $|U_w|$ holds with probability $1-o(1)$. %We can also obtain a lower bound on $|U_w|$. As each vertex has degree at least $k-1$,... \janos{A bound is missing here!} ... 

\end{proof}

\begin{proof}[Proof of Claim \ref{secondpropinproof}]
Having analyzed the effect of picking the semi-random set $U$ in the proof of \ref{firstpropinproof}, this proof analyzes the effect of picking the random functions $f_u$ after having already chosen $U$. For a given vertex $u\in U$ whose degree in $G^{\ast}$ is at least $d$, we can view picking the function $f_u$ as a two-step process. First, we pick a uniform random subset of $d-|N_{G^{\ast}}(u) \cap U|$ vertices from $W \cap N_{G^{\ast}}(u)$, and we reduce the range of $f_u$ to this random subset. In the second step, we pick the one-to-one function into this reduced range uniformly at random. After completing the first step for each vertex of $U$ of degree at least $d$, for every $u \in U$, the function $f_u$ has a reduced range of size at most $d$. Observe that, after performing the first step for each vertex $u$ in $U$, the number of vertices that can have degree at least $k/4$ in $H$ is at most $kd/(k/4)=4d$.  Let $T \subset W$ denote this set of at most $4d$ vertices. 

Let $u \in U(d)$ be a vertex adjacent to some $w\in V\setminus U(d)$. In $G^{\ast}$, $u$ is adjacent to all neighbors of $w$ in $U \setminus \{u\}$. Thus, $u$ has at least $|U_w|-1$ neighbors in $U$, and so at most $k-|U_w|$ vertices other than $u$ that are not adjacent to $u$. The probability that $w$ is in the image of $f_u$ is at most $\frac{k-|U_w|}{d-(|U_w|-1)}$. Hence, by linearity of expectation, the expected number of vertices $u \in N(w) \cap U(d)$ for which $w \in f_u$ is at most $\ell_w\left(\frac{k-|U_w|}{d+1-|U_w|}\right)$. By Chernoff's bound (Lemma ~\ref{Azuma}(i)), the probability that the number of vertices $u \in N(w) \cap U(d)$ for which $w \in f_u$ exceeds the expected number by at least $\sqrt{k\ln k}$ is at most $e^{-2(\sqrt{k \ln k})^2/d_1} = o(1/k)$. By the union bound over all the at most $4d <5k$ vertices $w \in T$,  with probability at least $1-5k\cdot o(1/k) = 1-o(1)$, every vertex $w$ of $H$ has degree at most $k/4$ or at most 
\begin{equation}\label{degreeexpect}|U_w|-\ell_w+ \ell_w\left(\frac{k-|U_w|}{d-1-|U_w|}\right)+\sqrt{k\ln k}=|U_w|-\ell_w\left(\frac{d-1-k}{d-1-|U_w|}\right)+\sqrt{k\ln k}.
\end{equation}
\end{proof}

\begin{proof}[Proof of Claim \ref{thirdpropinproof}]
Notice that the objective function (\ref{degreeexpectgreek1}) is decreasing in $\alpha$, and hence, given $\beta,\gamma,\delta$, is maximized when $\alpha$ is as small as possible given the constraints. This implies that $\alpha=0$ or 
$\gamma = \alpha+(\delta-\alpha)(1-\beta)/(2-\beta)$. 
\smallskip

{\bf Case 1.} $\alpha=0$. In this case, the maximum degree of $H$ is asymptotically $k\gamma \leq k\delta(1-\beta)/(2-\beta) \leq k\delta/2$, where the last inequality is tight for $\beta=0$. This settles this case. 
\smallskip

{\bf Case 2.} $\gamma = \alpha+(\delta-\alpha)(1-\beta)/(2-\beta)$. Let $\eta=1-(1-\beta)/(2-\beta)$, so $\gamma=\alpha+(\delta-\alpha)(1-\eta)=\eta \alpha+(1-\eta)\delta$ with $1/2 \leq \eta \leq 1$ and $\alpha \leq 2-1/\eta$. As a function of $\alpha$, the objective function after substituting in the formula $\gamma=\eta \alpha+(1-\eta)\delta$ is concave for each fixed $\eta$, and hence the objective function is maximized at the unique stationary value $\alpha^*=(9-3/\eta)/8$ (which is feasible if $\alpha^* \leq \beta$ or equivalently $\eta \geq 5/7$) or at the boundary point $\alpha=2-1/\eta$. 
\smallskip

{\bf Case 2a.} $\eta \geq 5/7$ and $\alpha=\alpha^*$. The objective function becomes $(3+1/\eta)/8$, which is decreasing for $\eta \geq 5/7$ and, hence, maximized at $\eta=5/7$. In this case, the maximum is $11/20 = 0.55 \leq 0.5625 = 9/16$. 
\smallskip

{\bf Case 2b.} $1/2 \leq \eta < 5/7$. In this case, the maximum is achieved at $\alpha=\beta$, that is, $\alpha=2-1/\eta$. Substituting in, the objective function becomes $$\frac{7\eta+1}{8} - \frac{2+1/\eta}{8-7\eta}.$$
The objective function is now decreasing in $\eta$ in its domain. Hence, it is maximized for $\eta=1/2$, and its value is at most $9/16$. 

\end{proof}

\textbf{Case II:}  Suppose $G$ has at least $k-\phi(k)$ vertices of degree at least $d$.  Then $\ell= k-\phi(k)$, where $\phi(k)=k^{.9}.$  
In this case, we follow the exact argument as in Case I. As vertices in $V \setminus U(d)$ do not necessarily have degree less than $d$, we do not have the bound (\ref{uwbound}) but alternatively $$\ell_w \leq |U_w| \leq \ell_w+\phi(k)$$ as $w$ will be adjacent to its $\ell_w$ neighbors in $U(d)$ and may be adjacent to any of the $\phi(k)$ vertices in $U \setminus U(d)$. Also, the bound (\ref{degreeexpect}) on the degree of $w$ in $H$ still holds, and we similarly arrive at the optimization problem of maximizing (\ref{degreeexpectgreek1}), except the constraints are now that $0 \leq \alpha \leq \beta=1-\phi(k)/k=1-o(1)$ and $0 \leq \gamma \leq \alpha+o(1)$. So, apart from the $o(1)$ terms, this becomes a special case (when $\beta=1$) of the optimization problem already studied in Case I. Hence, the analysis of this case reduces to that of Case I. 
\end{proof}

\begin{proof}[Proof of Proposition~\ref{cl2} (Analyzing $\mu$)] 
The analysis of $\mu$ below works the same in both Case I (when $\ell < k-\phi(k)$) and Case II (when $\ell = k-\phi(k)$). Hence, we will not treat the two cases separately. 

We will show that, with high probability, the edge multiplicity $\mu$ is at most $\phi(k)$, where $\phi(k)=k^{.9}$. 

As in the proof of Proposition \ref{cl1}, at the analysis of $\Delta$, after picking the random set $U$, at the selection of the random functions $f_u$ for vertices $u$ of degree at least $d$, we will ignore some edges. As before, this will effectively guarantee that for this purpose the degree of each vertex in $U$ is at most $d$. Note that then the number of vertices $w \in W$ adjacent to at least $\phi(k)$ vertices in $U$ is at most $kd/\phi(k) \leq 2k^{1.1}$. 

Each vertex of $G$ (and, hence, each vertex in $U$) has at least $k-1-\ell \geq \phi(k)-1$ neighbors in $V \setminus U(d)$. The set $W$ is a uniform random subset of size $n-k$ of the set $V \setminus U(d)$, which has size $n-\ell$. As $n \geq 2k-1$, $|U| = k$, and $\ell \geq 0$, we have $n-k \geq \frac{1}{2}(n-\ell)$. Hence, for each $u \in U$, the expected number of neighbors of $u$ in $W$ is at least $(\phi(k)-1)/2$. It follows from the Chernoff bound (Lemma~\ref{Azuma}(i)) and the union bound that, with probability $1-o(1)$ (as $k \to \infty$), each vertex in $U$ has at least $\phi(k)/4$ neighbors in $W$. Condition on this outcome. 

%Every vertex in $U(d)$ has at least $d-k \geq k/9$ neighbors in $W$. From the Chernoff bound and the union bound \jacob{ADD DETAILS}, with probability at least $3/4$, every vertex in $U \setminus U(d)$ has at least $(k-\ell)/4$ neighbors in $W$. Condition on this outcome. 

We will check that each pair $w,w'$ of distinct vertices in $W$ have edge multiplicity in $H$ at most $3\phi(k)$. Note that if a pair $w,w' \in W$ satisfies that $w$ has less than $\phi(k)$ neighbors in $U$, then the edge multiplicity in $H$ of the pair $w,w'$ is less than $\phi(k)$. 

Fix a pair $w,w' \in W$ of distinct vertices, each of which has at least $\phi(k)$ neighbors in $U$ (there are at most $(2k^{1.1})^2=4k^{2.2}$ such pairs). For each pair $u,u' \in U$, then, the probability that $f_u(u')=w$ and $f_{u'}(u)=w'$ is at most $(4/\phi(k))^2 $. Consider the random variable $Y$ which is the edge multiplicity of the pair $w,w'$ in $H$. The expected value of $Y$, by linearity of expectation over all pairs $u,u'$, satisfies $$\mathbb{E}[Y] \leq k^2(4/\phi(k))^2 =16k^{0.2}.$$

%If $u \in U(d)$ and $u' \in U \setminus U(d)$, then the probability that $f_u(u')=w$ and $f_{u'}(u)=w'$ is at most $(9/k)(4/(k-\ell))=36/k(k-\ell)$. If $u,u' \in U \setminus U(d)$, then the probability that $f_u(u')=w$ and $f_{u'}(u)=w'$ is at most $(4/(k-\ell))^2=16/(k-\ell)^2$. Consider the random variable $Y$ which is the edge multiplicity of the pair $w,w'$ in $H$. The expected value of $Y$, by linearity of expectation over all pairs $u,u'$, splitting into four cases depending on whether or not $u \in U(d)$ and whether or not $u' \in U \setminus U(d)$, satisfies $$\mathbb{E}[Y] \leq |U(d)|^2\frac{81}{k^2}+2|U(d)||U \setminus U(d)|\frac{36}{k(k-\ell)}+|U \setminus U(d)|^2\frac{16}{(k-\ell)^2} \leq 81+72+16 = 169.$$ 

As we reveal the functions $f_u$ one vertex $u$ at a time, $Y$ changes by at most 2 each time. Thus, by Azuma's inequality (Lemma~\ref{Azuma}(ii)), the probability that the pair $w,w'$ has edge multiplicity at least $100(k \log k)^{1/2}$ is at most $k^{-3}$. By the union bound over all the at most $4k^{2.2}$ pairs $w,w'$ of degree at least $\phi(k)$ in $U$, the probability the edge multiplicity $\mu$ exceeds $16k^{0.2}+100(k \log k)^{1/2} \leq \phi(k)$ is $o(1)$ (as $k \to \infty$). 
\smallskip

\end{proof}

Hence, by combining Lemma \ref{vizing} with Propositions \ref{cl1} and \ref{cl2}, we have

$$\chi'(H) \leq \left(\frac{9}{16} + o(1)\right)k.$$

\noindent This completes the proof of Lemma \ref{lem:fix}.\end{proof}

\section{Albertson's conjecture}~\label{sec4}
In this section, we prove Theorem \ref{main2}.  We recall an old conjecture of Hill, according to which the crossing number of the complete graph on $k$ vertices satisfies $cr(K_k)=H(k)$, where \[H(k):=\frac{1}{4} \left \lfloor \frac{k}{2}\right \rfloor \left \lfloor \frac{k-1}{2}\right \rfloor \left \lfloor \frac{k-2}{2}\right \rfloor \left \lfloor \frac{k-3}{2}\right \rfloor.\] It is known that $cr(K_k) \leq H(k)$ by a particular drawing of $K_k$. In the other direction, Balogh, Lidick\'y, and Salazar \cite{BLS} proved that $cr(K_k)$ is at least $0.9855 H(k)$, for large enough $k$. In particular, we have the following lemma.

\begin{lemma}[\cite{BLS}]\label{eq1234}
    If $k$ is sufficiently large, then $\cn(K_k) > k^4/65$.
\end{lemma}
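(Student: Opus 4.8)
The plan is to combine the Balogh--Lidick\'y--Salazar lower bound $\cn(K_k) \geq 0.9855\, H(k)$ (valid for sufficiently large $k$) with an elementary lower estimate of $H(k)$, and then observe that the resulting constant strictly exceeds $1/65$ once the lower-order terms are absorbed.

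First I would bound $H(k)$ from below by removing the floor functions: since $\lfloor k/2\rfloor \geq (k-1)/2$, $\lfloor (k-1)/2\rfloor \geq (k-2)/2$, $\lfloor (k-2)/2\rfloor \geq (k-3)/2$, and $\lfloor (k-3)/2\rfloor \geq (k-4)/2$, we get
\[
H(k) \;=\; \frac14 \left\lfloor \frac{k}{2}\right\rfloor \left\lfloor \frac{k-1}{2}\right\rfloor \left\lfloor \frac{k-2}{2}\right\rfloor \left\lfloor \frac{k-3}{2}\right\rfloor \;\geq\; \frac{(k-1)(k-2)(k-3)(k-4)}{64}.
\]
Expanding the numerator, $(k-1)(k-2)(k-3)(k-4) = k^4 - 10k^3 + O(k^2) = k^4\bigl(1 - o(1)\bigr)$ as $k\to\infty$, so $H(k) \geq \frac{k^4}{64}\bigl(1-o(1)\bigr)$. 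Feeding this into the Balogh--Lidick\'y--Salazar bound gives, for $k$ large,
\[
\cn(K_k) \;\geq\; 0.9855\, H(k) \;\geq\; \frac{0.9855}{64}\, k^4\bigl(1-o(1)\bigr).
\]

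Finally I would compare constants: $\frac{0.9855}{64} = 0.0153984\ldots > 0.0153846\ldots = \frac{1}{65}$, so there is a fixed $\varepsilon_0 > 0$ with $\frac{0.9855}{64}(1-\varepsilon_0) > \frac{1}{65}$; choosing $k$ large enough that both the Balogh--Lidick\'y--Salazar bound applies and the $o(1)$ error above is below $\varepsilon_0$ yields $\cn(K_k) > k^4/65$, as desired. The only delicate point is that the two constants $0.9855/64$ and $1/65$ are extremely close, so one must be slightly careful to check that the corrections coming from dropping the floors and from the expansion of $(k-1)(k-2)(k-3)(k-4)$ are genuinely $o(k^4)$ and therefore do not consume the narrow margin; this is routine, and no real obstacle arises.
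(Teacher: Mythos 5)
Your proposal is correct and follows essentially the same route the paper takes: the lemma is stated as an immediate consequence of the Balogh--Lidick\'y--Salazar bound $\cn(K_k)\geq 0.9855\,H(k)$ together with the asymptotic $H(k)=(1-o(1))k^4/64$, and your careful check that $0.9855/64>1/65$ with enough slack to absorb the $o(1)$ corrections is exactly the (routine) verification the paper leaves implicit.
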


A related old conjecture of Zarankiewicz, is that $\cn(K_{a,b})=\left \lfloor \frac{a}{2}\right \rfloor \left \lfloor \frac{a-1}{2}\right \rfloor \left \lfloor \frac{b}{2}\right \rfloor \left \lfloor \frac{b-1}{2}\right \rfloor$. Towards this conjecture, Balogh et al. \cite{BLNPSS} recently proved the following result. 

\begin{lemma}[\cite{BLNPSS}]\label{eq1234abc}
    If $a,b$ are sufficiently large, then $\cn(K_{a,b}) > 0.9118a^2b^2/16$.
\end{lemma}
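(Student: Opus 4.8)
The plan is to reduce the statement to the ``square'' case $a=b=n$ and then to derive that case from the flag algebra / semidefinite programming method applied to a combinatorial model of good drawings of $K_{n,n}$ --- the same kind of computer-assisted argument that underlies Lemma~\ref{eq1234} for $\cn(K_n)$. For the reduction, fix an integer $t$ and take an optimal, hence good, drawing $D$ of $K_{a,b}$. Every crossing of $D$ is a crossing of two independent edges and so involves exactly two vertices on each side; averaging the trivial bound $\cn(K_{t,t})\le c(D[S])$ over all $\binom{a}{t}\binom{b}{t}$ subdrawings $D[S]$ induced by $t$ vertices from each side, and noting that each crossing lies in $\binom{a-2}{t-2}\binom{b-2}{t-2}$ of them, gives
\begin{equation*}
\cn(K_{a,b})\;\ge\;\cn(K_{t,t})\cdot\frac{\binom{a}{t}\binom{b}{t}}{\binom{a-2}{t-2}\binom{b-2}{t-2}}\;=\;\cn(K_{t,t})\cdot\frac{a(a-1)\,b(b-1)}{t^{2}(t-1)^{2}}.
\end{equation*}
Hence it suffices to prove $\cn(K_{n,n})\ge(c-o(1))\,n^{4}/16$ as $n\to\infty$ for some constant $c>0.9118$: choosing $t$ large (so that $t^{2}/(t-1)^{2}$ is close to $1$) and then $a,b$ large, the displayed inequality yields $\cn(K_{a,b})>0.9118\,a^{2}b^{2}/16$. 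As a sanity check, and to see that a strong method is needed, Kleitman's exact formula $\cn(K_{5,n})=4\lfloor n/2\rfloor\lfloor(n-1)/2\rfloor$ fed into the analogous double count (with $\cn(K_{5,b})$ in place of $\cn(K_{t,t})$) gives only $\cn(K_{a,b})\gtrsim a^{2}b^{2}/20$, i.e.\ about $80\%$ of the conjectured value, and no presently known exact value of $\cn(K_{s,t})$ does better.

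For the square case the approach is: (i) pass to a combinatorial model in which $\cn(K_{n,n})$ is bounded below by a quantity depending only on \emph{local} data --- concretely, using the rotation system (cyclic order of neighbours) at each vertex of one side, one has $\cn(K_{n,n})\ge\sum_{i<j}Q(\pi_{i},\pi_{j})$ over pairs of ``stars'', where $Q(\pi,\sigma)$ is the fixed function giving the minimum number of crossings between two stars with prescribed rotations $\pi,\sigma$; (ii) observe that, for limits of drawings, such local configurations (rotations, and more generally good drawings on a bounded number of vertices) occur in only finitely many isomorphism types, so the normalized crossing number converges to a fixed linear functional of the limiting densities of these types; (iii) invoke Razborov's flag algebra machinery: rooted (``flag'') versions of the configurations furnish, for every positive semidefinite matrix indexed by flags, a valid linear inequality among the densities, and together with the linear constraints coming from the known small values $\cn(K_{5,5})=16$ and $\cn(K_{6,6})=36$ these inequalities carve out a relaxation of the set of attainable densities; (iv) maximize the certified lower bound on the crossing density over this relaxation --- a semidefinite program --- solve it numerically, and then round the floating-point optimum to an exact rational dual certificate whose validity is verified symbolically. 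If the certified value is at least $0.9118/16$, combining with the reduction above finishes the proof.

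I expect the main obstacle to be parts (iii)--(iv). Setting up the flag algebra framework for \emph{drawings} rather than ordinary graphs is delicate: one must fix the correct notion of local configuration and isomorphism for (bipartite) good drawings, enumerate the potentially long list of drawings / rotation patterns on the chosen number of vertices, and verify the chain-rule and averaging identities that make the method applicable --- and simultaneously handle the fact that, for $K_{n,n}$, \emph{both} sides grow, so even the rotation types must be treated as limit objects in their own right. One must then not only solve a sizeable semidefinite program but rigorously certify its value, which is precisely the step that turns a numerical computation into a proof. A secondary point worth flagging is that, unlike the elementary $80\%$ bound, there appears to be no short hand argument reaching $0.9118$: the constant is essentially the numerical output of the SDP, so the ``proof'' consists of the reductions above together with the certified SDP solution.
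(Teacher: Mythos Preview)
The paper does not prove this lemma at all; it is quoted as a black-box result of Balogh, Lidick\'y, Norin, Pfender, Salazar, and Spiro~\cite{BLNPSS}, so there is no in-paper argument to compare against. Your outline is broadly faithful to the methodology of~\cite{BLNPSS}: an averaging/double-counting reduction of $\cn(K_{a,b})$ to a fixed small template, combined with a flag-algebra/semidefinite-programming computation on the combinatorial (rotation-system) data of good drawings, followed by rounding to an exact rational certificate. In that sense your plan and the cited source agree.

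That said, what you have written is a roadmap, not a proof. The only place the constant $0.9118$ enters is as the certified optimum of the SDP, and you explicitly do not supply that certificate; without it the argument establishes nothing beyond the elementary $80\%$ bound you mention. One technical remark: the standard implementation (and that of~\cite{BLNPSS}) fixes a \emph{small} $m$, studies $\cn(K_{m,n})$ as $n\to\infty$ via the rotation system on the size-$m$ side (so that the flag types are genuinely finite), and only then averages over the other side to reach general $K_{a,b}$. Your reduction to $K_{t,t}$ with both sides large is also valid as a counting step, but your step~(i) description of $Q(\pi_i,\pi_j)$ already presupposes the one-side-fixed model, and your later comment that ``both sides grow, so even the rotation types must be treated as limit objects'' is a complication that the actual proof avoids precisely by keeping one side bounded. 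For the purposes of the present paper, simply citing~\cite{BLNPSS} is the correct move.
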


Before turning to the proof of Theorem~\ref{main2}, as a warm-up, we establish the following useful asymptotic result.%, which will be useful for the proof of Theorem \ref{main2}.

\begin{lemma}\label{asympt}
For $\varepsilon > 0$ there is a $k_0$ such that the following holds.  If $G$ is a graph on $n < (1.64 - \varepsilon)k$ vertices with $\chi(G)=k$ and $k > k_0$, then \[\cn(G) > \cn(K_k) -   k^3 /2.\]  In particular, $\cn(G) \geq (1 - o(1))\cn(K_k)$, as $k\rightarrow\infty$.  
\end{lemma}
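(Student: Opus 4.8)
The plan is to use Theorem~\ref{main1}(ii) to reduce the crossing number of $G$ to that of a weak immersion of $K_k$, and then to bound the ``cost'' of the immersion. First I would invoke Theorem~\ref{main1}(ii): since $n<(1.64-\varepsilon)k$ and $k$ is large, $G$ contains a weak immersion $\phi$ of $K_k$, consisting of branch vertices $\phi(V(K_k))$ together with edge-disjoint paths $\phi(e)$, $e\in E(K_k)$. Fix any drawing $D$ of $G$ in the plane realizing $\cn(G)$ crossings. The image of the immersion, namely the union of all the paths $\phi(e)$, inherits a drawing from $D$; call it $D'$. Since the paths are edge-disjoint, each edge of $G$ is used by at most one path, so the number of crossings in $D'$ is at most $\cn(G)$.

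Next I would turn $D'$ into a drawing of $K_k$ by ``contracting'' each path $\phi(e)$ to a single edge drawn along the curve traced by $\phi(e)$. The only subtlety is that a path $\phi(e)$ may pass through branch vertices other than its two endpoints, and two such curves for different edges $e,f$ of $K_k$ may cross at a shared branch vertex or cross extra times near the branch vertices; also a curve $\phi(e)$ may pass through a branch vertex $\phi(w)$, which is not allowed in a drawing of $K_k$ (edges cannot pass through vertices). Both issues are handled by a standard local perturbation: reroute each curve slightly around every branch vertex it passes through, which introduces only a bounded number of new crossings per such incidence. The key counting step is that the total number of newly created crossings is $O(k^3)$: there are $\binom{k}{2}=O(k^2)$ edges of $K_k$, and each corresponding path passes through at most $k$ branch vertices, and at each branch vertex the number of curves to be locally reordered/separated is at most the degree, $k-1$; a careful accounting (e.g.\ rerouting all curves through a fixed branch vertex $\phi(w)$ within a small disk costs at most $O(k^2)$ crossings per branch vertex, summed over $k$ branch vertices) gives at most $k^3/2$ new crossings once $k$ is large. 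This yields a drawing of $K_k$ with at most $\cn(G)+k^3/2$ crossings, hence $\cn(K_k)\le \cn(G)+k^3/2$, which is exactly the claimed inequality $\cn(G)>\cn(K_k)-k^3/2$ (strictness is free since crossing numbers are integers and $k^3/2$ may be taken non-integral, or one just writes $\ge$).

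For the ``in particular'' clause, I would combine the displayed bound with Lemma~\ref{eq1234}: for $k$ sufficiently large $\cn(K_k)>k^4/65$, so $k^3/2 = o(\cn(K_k))$ as $k\to\infty$, and therefore $\cn(G)\ge \cn(K_k)-k^3/2 = (1-o(1))\cn(K_k)$. This completes the argument.

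I expect the main obstacle to be the bookkeeping in the perturbation step: one must be careful that rerouting curves around branch vertices does not create a super-cubic number of crossings, and that the perturbations for different branch vertices can be done in disjoint small disks so their effects do not compound. The cleanest way is to process the branch vertices one at a time, each time working inside a disk so small that it meets only the curve-pieces incident to that branch vertex and no crossings of $D'$; inside such a disk the at most $k-1$ curve-strands passing through (or ending at) $\phi(w)$ can be re-sorted into the correct cyclic order using at most $\binom{k-1}{2}<k^2/2$ crossings, for a total over all $k$ branch vertices of at most $k^3/2$. Getting this bound to come out as exactly $k^3/2$ rather than some larger absolute constant times $k^3$ may require slightly more care (or absorbing the discrepancy into the $o(\cn(K_k))$ statement, which is all that is needed downstream).
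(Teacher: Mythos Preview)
Your overall strategy—immerse $K_k$ using Theorem~\ref{main1}(ii), draw each $K_k$-edge along its path in an optimal drawing of $G$, then locally perturb and count the extra crossings—is exactly what the paper does. The gap is in the bookkeeping, and it is not just a matter of tightening a constant.

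First, you only reroute around \emph{branch} vertices. But the paths $P_{ij}$ are merely edge-disjoint, so several of them can pass through the same \emph{non-branch} vertex $v$. In the drawing of $G$ these paths all meet at the point $v$ without crossing; once you trace the $K_k$-edges along them, you have several $K_k$-edges passing through the same point of the plane, which must be separated and therefore contribute new crossings. The paper handles this explicitly: at a non-branch vertex $v$ the number of paths through $v$ is $f(v)\le d(v)/2$, costing at most $\binom{f(v)}{2}$ crossings, and summing over the $n-k$ non-branch vertices gives at most $(n-k)n^2/8$.

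Second, your claim that a branch vertex $\phi(w)$ sees ``at most $k-1$ curve-strands'' is wrong: the number of $G$-edges at $\phi(w)$ (each possibly used by a different path) is $d(\phi(w))$, which can be as large as $n-1\approx 1.64k$. The paper instead observes that at a branch vertex $v_i$ only $(d(v_i)-(k-1))/2\le (n-k)/2$ paths actually pass \emph{through}, and each rerouted path crosses at most $(d(v_i)-2)/2<n/2$ strands by going the shorter way around; this yields $k\cdot\frac{n-k}{2}\cdot\frac{n}{2}$ at branch vertices. Adding the two contributions and using $n<(1.64-\varepsilon)k$ is what makes the total come out below $k^3/2$. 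Your sketch, as written, gives neither the correct constant nor a complete argument for $O(k^3)$ until the non-branch case is added.
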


\begin{proof}
Let $G$ be drawn in the plane with $\cn(G)$ crossings. For a vertex $v$ of $G$, let $d(v)$ denote the degree of $v$ in $G$. By Theorem \ref{main1}(ii), $G$ contains $K_k$ as a weak immersion.  Let $v_1,\ldots, v_k$ be the branch vertices of the immersion, and let $P_{ij}$ be the path in $G$ used in the weak immersion with endpoints $v_i$ and $v_j$. 

Consider a drawing of $K_k$ in the plane, with vertices $v_1,\ldots, v_k$, where the edge between $v_i$ and $v_j$ is drawn along the path $P_{ij}$ such that it goes around every branch vertex that is an internal vertex of $P_{ij}$. By going either clockwise or counterclockwise around a branch vertex $v$, we can achieve that in the neighborhood of $v$, the drawing of the edge between $v_i$ and $v_j$ participates in at most $(d(v)-2)/2$ crossings. Apart from small neighborhoods of the branch vertices along the path $P_{ij}$, the drawing of the edge connecting $v_i$ to $v_j$ coincides with the drawing of $P_{ij}$. In particular, the drawing of the edge between $v_i$ and $v_j$ passes through every non-branch vertex that is an internal vertex of $P_{ij}$.

There are two types of crossings in this drawing of $K_k$. All crossings that are already crossings in the original drawing of $G$ are of \emph{type 1}, so there are at most $\cn(G)$ of them. The remaining crossings are of \emph{type 2}. They occur in small neighborhoods of vertices of $G$. The latter crossings fall into two categories depending on whether they occur in a small  neighborhood of a non-branch vertex or a branch vertex. 

For non-branch vertices $v$, the number of crossings in the drawing of $K_k$ at $v$ is at most ${f(v) \choose 2}$, where $f(v)$ denotes the number of paths $P_{ij}$ in the $K_k$ immersion, in which $v$ is an internal vertex of the path $P_{ij}$. Note that $f(v) \leq d(v)/2$, as $v$ is an 
internal vertex of at most $d(v)/2$ edge-disjoint paths. Obviously, $d(v) \leq n-1$ and there are at most $n-k$ non-branch vertices. Therefore, at non-branch vertices, the total number of crossings of type 2 is at most \[(n-k){(n-1)/2 \choose 2} \leq (n-k)n^2/8.\]

For each of the $k$ branch vertices $v_i$, there are $k-1$ paths $P_{ij}$ ending at $v_i$. Thus, $v_i$ is an internal vertex of at most $(d(v_i)-(k-1))/2 \leq (n-k)/2$ paths $P_{\ell j}$. In the drawing of $K_k$, the edge from $v_{\ell}$ to $v_j$ participates in at most $(d(v_i)-2)/2 < n/2$ crossings in the neighborhood of $v_i$, by going either clockwise or counterclockwise around $v_i$. Thus, in the neighborhoods of branch vertices, altogether there are  at most \[k\frac{n-k}{2}\frac{n}{2}\] crossings of type 2. 

Adding up the above two bounds and using our assumption that $n < (1.64 - \varepsilon)k$, we conclude that the total number of crossings in the drawing of $K_k$, which occur in small neighborhoods of the vertices of $G$ is at most $k^3/2$. Thus, we have produced a drawing of $K_k$ with fewer than $\cn(G)+k^3/2$ crossings. Consequently, we have \[\cn(G) > \cn(K_k) - k^3/2 = (1-o(1))\cn(K_k),\] as desired. \end{proof}

The well-known crossing lemma discovered by Ajtai, Chv\'atal, Newborn, Szemer\'edi \cite{ACNS} and independently, by Leighton \cite{L}, states that every graph $G$ with $n$ vertices and $m \geq 4n$ edges satisfies $\cn(G) \geq cm^3/n^2$, where $c>0$ is an absolute constant. The constant has been improved by several authors. The currently best constant is due to B\"ungener and Kaufmann.

\begin{lemma}[\cite{BK}]\label{ackcr}
    Let $G$ be a graph on $n$ vertices with $m$ edges.  If $m \geq 6.95n$, then $\cn(G) \geq\frac{1}{27.48}\frac{m^3}{n^2}.$
\end{lemma}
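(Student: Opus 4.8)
\textbf{Proposal for the proof of Lemma \ref{ackcr}.}

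The statement is the quantitatively optimized crossing lemma of B\"ungener and Kaufmann, so the plan is to follow the classical probabilistic amplification argument of Ajtai--Chv\'atal--Newborn--Szemer\'edi and Leighton, but feed into it the best known bound for sparse graphs rather than the trivial Euler-type bound $\cn(G)\ge m-3n+6$. First I would record the sparse input: there is a constant $c_0$ and a (small) linear threshold such that every graph on $N$ vertices with $M$ edges satisfies $\cn(G)\ge c_0 M - \alpha N$ for suitable $\alpha$; the state of the art here comes from refined discharging / crossing-count arguments for graphs with few crossings per edge (the $k$-planar hierarchy), and it is exactly this sparse estimate, with its sharpened constants, that B\"ungener--Kaufmann improve. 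I would quote that sparse bound as the base case and treat its proof as a black box, since the excerpt permits assuming earlier results and this is the genuinely new ingredient of \cite{BK}.

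Next comes the standard amplification. Fix an optimal drawing of $G$ with $\cn(G)$ crossings. Let $p\in(0,1]$ be a parameter to be chosen, and let $G_p$ be the random induced subgraph obtained by keeping each vertex independently with probability $p$. Writing $X$, $Y$, $Z$ for the numbers of surviving vertices, edges, and crossings (of the induced sub-drawing), linearity of expectation gives $\mathbb E[X]=pn$, $\mathbb E[Y]=p^2m$, $\mathbb E[Z]=p^4\cn(G)$. Applying the sparse base-case bound to $G_p$ in expectation yields
\[
p^4\,\cn(G)\;=\;\mathbb E[Z]\;\ge\;\mathbb E[\cn(G_p)]\;\ge\;c_0\,p^2 m-\alpha\, p n,
\]
hence $\cn(G)\ge c_0 m/p^2-\alpha n/p^3$. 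I would then optimize over $p$: the right-hand side is maximized at $p=\tfrac{3\alpha n}{2c_0 m}$ (which lies in $(0,1]$ precisely when $m$ exceeds the stated linear multiple of $n$, giving rise to the hypothesis $m\ge 6.95 n$), and substituting this value collapses the two terms into a single expression of the form $c'\,m^3/n^2$ with $c'$ an explicit function of $c_0$ and $\alpha$. Tracking the constants from the B\"ungener--Kaufmann sparse bound through this one-line optimization is what produces the displayed constant $1/27.48$ and the threshold $6.95$.

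The one subtlety to handle carefully is that the inequality $\mathbb E[\cn(G_p)]\le p^4\cn(G)$ requires the base-case bound to hold for \emph{every} outcome of $G_p$ (so that one may take expectations termwise), which it does since it is a universal inequality; and one must make sure the chosen drawing of $G$ restricts to a legitimate drawing of $G_p$ with at most the inherited number of crossings, which is immediate for induced sub-drawings. The main obstacle, therefore, is not the amplification step (which is routine and essentially forced) but obtaining the sharp sparse estimate with the improved constants; since that is precisely the content of \cite{BK}, I would cite it and devote the written proof only to the clean probabilistic deletion-and-optimization computation above.
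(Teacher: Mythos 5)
The paper does not prove this statement at all: Lemma~\ref{ackcr} is quoted directly from B\"ungener--Kaufmann \cite{BK}, so there is no internal proof to compare against. Your sketch is nevertheless a faithful reconstruction of how such constants are actually obtained (it is exactly the Ackerman-style scheme: a linear bound $\cn(G)\ge c_0m-\alpha n$ from $k$-planar edge-density results, then vertex-sampling amplification with $p=3\alpha n/(2c_0m)$, whose feasibility $p\le 1$ is what produces the threshold $m\ge 6.95n$), and you correctly flag that the genuinely new content of \cite{BK} is the sharpened sparse/linear estimate, which you treat as a black box; as a proof-by-citation of the cited lemma this is acceptable and consistent with the paper's usage.
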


We also need the following two simple lemmas.

\begin{lemma}[\cite{PT13}]\label{addedge}
Let $G$ be a graph with $m$ edges, and let $x$ and $y$ be two nonadjacent vertices. Let $G+xy$ denote the graph obtained by adding the edge $(x,y)$. Then we have \[\cn(G+xy) \leq \cn(G)+m.\]
\end{lemma}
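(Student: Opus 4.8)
The plan is to take an optimal drawing of $G$ and add the edge $xy$ so that it crosses each of the $m$ existing edges at most once; the resulting drawing of $G+xy$ then has at most $\cn(G)+m$ crossings, giving the bound. First I would dispose of the trivial case: if $x$ and $y$ lie in different components of $G$, fix an optimal drawing in which the components occupy disjoint disks, so $xy$ can be added crossing nothing and $\cn(G+xy)\le\cn(G)$. Hence we may assume $G$ is connected (deleting edges of other components only decreases $m$ and $\cn$). Fix an optimal drawing $D$ of $G$, which we may assume is \emph{good}: no edge crosses itself, adjacent edges do not cross, and two edges cross at most once — any violation of these can be removed while reducing the number of crossings.

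The heart of the proof is the routing claim: in $D$ one can draw a Jordan arc $\gamma$ from $x$ to $y$, internally avoiding every vertex of $G$, that crosses each edge of $G$ at most once. Granting this, reading $\gamma$ as the drawing of $xy$ produces a drawing of $G+xy$ with at most $\cn(G)+m$ crossings, so $\cn(G+xy)\le\cn(G)+m$. To prove the claim I would pass to the planarization $\widehat D$ (turn each crossing into a degree-$4$ vertex) and route $\gamma$ through its faces, choosing $\gamma$ among all vertex-avoiding arcs from $x$ to $y$ to minimize the total number of crossings with the edges of $G$ (and, subject to this, to be in general position, meeting edges transversally and away from crossing points). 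If such a minimal $\gamma$ crossed some edge $e$ of $G$ at two points $p,q$ consecutive along $\gamma$, then the sub-arc of $\gamma$ between $p$ and $q$ lies entirely on one side of $e$ and, together with the sub-arc $\beta$ of $e$ between $p$ and $q$, bounds a disk $\Delta$; rerouting $\gamma$ to run just along the \emph{other} side of $\beta$ deletes both crossings with $e$, and one then argues — using the good-drawing hypotheses (so $e$ meets each other edge at most once and meets no edge adjacent to it) together with a parity/Jordan-curve count of which edges enter $\Delta$ — that this does not increase the total number of crossings, contradicting minimality. Thus $\gamma$ crosses each edge of $G$ at most once, proving the claim.

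The main obstacle is precisely this last exchange step: checking that the local reroute along $\beta$ creates no more new crossings than the two it removes. This is where the good-drawing assumptions on $D$ and a careful analysis of how the edges crossing $\beta$ interact with the disk $\Delta$ are needed; the reduction to the connected case and the final count of crossings contributed by $\gamma$ are then routine.
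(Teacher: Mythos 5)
The paper itself does not prove this lemma (it is quoted from the Pach--T\'oth reference), so your argument has to stand on its own. Your framework --- an optimal good drawing $D$, an $x$--$y$ arc $\gamma$ minimizing the number of crossings with $D$, and an exchange at an edge crossed twice --- is the right one, but the justification of the key claim has a genuine gap. When you delete the two crossings of $\gamma$ with $e$ at $p,q$ and reroute along the other side of $\beta=e[p,q]$, the new arc picks up one crossing for every edge crossing $\beta$. Your parity/Jordan-curve count pays for these only for edges whose two endpoints lie on the same side of $\partial\Delta$: such an edge, crossing $\beta$ (exactly once, by goodness), must cross $\partial\Delta$ again, hence cross the discarded piece $\gamma(p,q)$. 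But an edge with exactly one endpoint \emph{inside} $\Delta$ can cross $\beta$ once and $\gamma(p,q)$ not at all; each such edge makes the reroute worse by one, and if there are three or more of them the reroute strictly increases the number of crossings, so minimality of $\gamma$ is not contradicted. This is not a bookkeeping issue you can fix within your hypotheses: the claim ``a crossing-minimal arc crosses each edge at most once'' is false for drawings that are merely good. For instance, draw one edge as a two-turn spiral around $x$ and block the spiral corridor near its seam by several vertex-disjoint three-edge paths, each edge of which crosses the spiral edge exactly once; one can arrange that every crossing-minimal arc from $x$ to a far-away $y$ crosses the spiral edge twice, while every arc crossing each edge at most once is strictly more expensive. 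Since your exchange never uses optimality of $D$, only goodness, it cannot prove the claim.

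The standard repair is a \emph{two-way} exchange that does use optimality. Let $a$ be the number of crossings of $\gamma(p,q)$ with edges other than $e$, and $b$ the number of crossings of other edges with $\beta$. If $b\le a$, your reroute of $\gamma$ alongside $\beta$ removes $2+a$ crossings and adds at most $b$, so it decreases the count by at least $2$, contradicting the minimality of $\gamma$. If $b>a$, instead redraw $e$ itself, replacing $\beta$ by an arc running along $\gamma[p,q]$; since $p,q$ are consecutive crossings along $\gamma$, this portion of $\gamma$ is disjoint from $e$ and avoids all vertices, so the redrawing introduces no new self-crossings of $e$ and yields a drawing of $G$ with $a-b<0$ fewer crossings than $D$, contradicting $\cn(G)$-optimality. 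With this correction the rest of your plan (reducing to a good optimal drawing, the trivial case where $x$ and $y$ can be joined crossing-free, and the final count of at most one crossing per edge) goes through and gives $\cn(G+xy)\le\cn(G)+m$. Also note that you need a \emph{strict} decrease to contradict minimality; the corrected argument provides it, whereas ``does not increase'' alone would not.
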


%\begin{lemma}\label{chromaticnumberchange} Let $G$ be a graph on $n$ vertices with chromatic number $k$ and $1 \leq a \leq n$ be an integer. Then $G$ has an induced subgraph on $a$ vertices that has chromatic number at least $\lfloor ka/n \rfloor$. \end{lemma}\begin{proof} There is a vertex partition of $G$ into $k$ independent sets. The union of any $t$ of these independent sets must have chromatic number $t$. Pick $a$ vertices by deleting the vertices from the largest independent sets. There are at least $\lfloor ka/n \rfloor$ remaining independent sets. \end{proof}

\begin{lemma}\label{numberofedgeschange}
Let $G$ be a graph on $n$ vertices with $m$ edges and $1 \leq a \leq n$ be an integer. Then $G$ has an induced subgraph on $a$ vertices with at least $m{a \choose 2}/{n \choose 2}$ edges. 
\end{lemma}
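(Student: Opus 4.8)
\textbf{Proof proposal for Lemma \ref{numberofedgeschange}.} The plan is a one-line probabilistic (averaging) argument. I would let $A$ be a uniformly random subset of $V(G)$ of size $a$, and consider the random variable $X$ equal to the number of edges of $G$ with both endpoints in $A$, i.e. the number of edges of the induced subgraph $G[A]$. The goal is to show $\mathbb{E}[X] = m\binom{a}{2}/\binom{n}{2}$, since then some particular choice of $A$ must satisfy $X \geq \mathbb{E}[X]$, and that choice gives the desired induced subgraph.

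To compute $\mathbb{E}[X]$, I would use linearity of expectation over the edges. For a fixed edge $e = uv$, the probability that $u \in A$ and $v \in A$ equals $\binom{n-2}{a-2}\big/\binom{n}{a}$, which simplifies to $\frac{a(a-1)}{n(n-1)} = \binom{a}{2}\big/\binom{n}{2}$ (this needs $a \geq 2$; when $a = 1$ the claimed bound is $0$ and is trivially met by any single vertex, so I would dispose of that edge case separately, or simply note the formula $\binom{a}{2}/\binom{n}{2}$ equals $0$ there anyway). Summing over the $m$ edges gives $\mathbb{E}[X] = m\binom{a}{2}/\binom{n}{2}$, completing the argument.

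There is essentially no obstacle here: the only thing to be mildly careful about is the degenerate range of $a$ (namely $a=1$, and implicitly $a=n$ where the statement is an equality), and making sure the hypergeometric probability is written correctly. I would present it in two or three sentences. An alternative, equally short, would be a direct double-counting: sum $|E(G[A])|$ over all $\binom{n}{a}$ subsets $A$ of size $a$; each edge is counted $\binom{n-2}{a-2}$ times, so the average value of $|E(G[A])|$ over all such $A$ is $m\binom{n-2}{a-2}/\binom{n}{a} = m\binom{a}{2}/\binom{n}{2}$, and hence the maximum is at least the average. Either phrasing works; I would go with the probabilistic one for brevity.
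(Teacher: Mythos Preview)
Your proposal is correct and is exactly the paper's own argument: pick a uniformly random $a$-subset $A$, observe by linearity of expectation that $\mathbb{E}[|E(G[A])|]=m\binom{a}{2}/\binom{n}{2}$, and conclude that some $A$ achieves at least this many edges. The paper states it in one sentence without spelling out the edge probability or the $a=1$ edge case, but the content is identical.
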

\begin{proof}
If we take a uniform random subset $A$ of $a$ vertices, the expected number of edges in $A$ is $m{a \choose 2}/{n \choose 2}$, and hence, there is an induced subgraph with at least that many edges. 
\end{proof}

% While the goal is to prove Albertson's conjecture, we can easily prove unconditionally that $\cn(G) \geq ck^4$ if $G$ has chromatic number $k \geq 5$. \todo{I am unsure what the best constant factor we know how to do here, but using a critical subgraph and the crossing lemma does reasonably well.} 

We are now ready to prove Theorem \ref{main2}. 

\medskip

\begin{proof}[Proof of Theorem \ref{main2}]
It suffices to prove the statement for $k$-critical graphs as every graph of chromatic number $k$ has a $k$-critical subgraph.  The proof is by induction on $\ell:=n-k$. The base case $\ell=n-k=0$ is trivial, as in this case $G=K_k$. Let $\ell$ be a positive integer and suppose we have established the desired result for smaller nonnegative integer values of $\ell$. Let $\varepsilon > 0$ and $k > 2^{70}$ be a large constant satisfying the conditions of Theorem \ref{main1}(ii), and let $G$ be a $k$-critical graph on $n < (1.64 - \varepsilon)k$ vertices with $n-k=\ell$.  By Lemma \ref{lemGa} (Gallai's theorem), $G$ has a vertex partition \[\mathcal{P}:V(G)=V_1 \cup \ldots \cup V_t\] into $t \geq 2$ nonempty parts such that each $G[V_i]$ is $k_i$-critical with  $n_i$ vertices with $n_i \geq 2k_i-1$, and every pair of vertices in different parts are adjacent. In particular, we have $\sum_{i=1}^t k_i=k$ and  $\sum_{i=1}^t n_i = n$. Set $\delta=1/11$, $\delta'=2^{-12}$, and $c=2^{-60}$, say. We distinguish three cases. 

\medskip

\noindent {\bf Case 1:} There is a part $V_i$ with $1<n_i \leq \delta k$.  Add missing edges to $V_i$, one at a time, until $V_i$ is complete. Each time we add an edge, we upper bound the increase of the crossing number by applying Lemma \ref{addedge}. As $G[V_i]$ is $k_i$-critical, each vertex in $G[V_i]$ has degree at least $k_i-1$. Hence, the number of nonadjacent pairs in $G[V_i]$ is at most $n_i(n_i-k_i)/2$. In total, by making $G[V_i]$ complete, we increase the crossing number by at most $n^2n_i(n_i-k_i)/4$. The resulting graph $G'$, obtained by completing part $V_i$, has $n$ vertices and is $k'$-critical with $k'=k+n_i-k_i$. Thus,  
\begin{eqnarray}\label{12x}
\cn(G') & \leq & \cn(G)+n^2n_i(n_i-k_i)/4 = \cn(G)+n^2n_i(k'-k)/4 \nonumber \\ 
& \leq &\cn(G)+\left(\frac{4}{65}-c\right)k^3(k'-k),
\end{eqnarray}
where in the last inequality we used that $n < (1.64 - \varepsilon)k$,  $n_i \leq k/11$, and $c= 2^{-60}.$

Applying the induction hypothesis to $G'$, we obtain 
\begin{equation}\label{12xy}
\cn(G') \geq \cn(K_{k'})+c(n-k')k'^3.\end{equation}

\noindent  Note that by averaging, we have 
\begin{eqnarray}\label{12xyz} \cn(K_{k'}) & \geq & \cn(K_k){k' \choose 4}/{k \choose 4} \geq (k'/k)^4\cn(K_k) \geq \left(1+4\left(\frac{k'}{k}-1\right)\right)\cn(K_k) \nonumber \\ & = & \cn(K_k) + 4\cn(K_k)(k'-k)/k \geq \cn(K_k) + \frac{4}{65}k^3(k'-k),
\end{eqnarray}
where in the last inequality we used Lemma \ref{eq1234}. 

Putting (\ref{12x})--(\ref{12xyz}) together, we obtain 
\begin{eqnarray*}
\cn(G) & \geq & \cn(K_{k'})+c(n-k')k'^3-\left(\frac{4}{65}-c\right)k^3(k'-k) \\ & \geq & 
\cn(K_k) + ck^3(k'-k)+c(n-k')k'^3\\ &\ge & \cn(K_k)+ck^3(n-k).\end{eqnarray*}
This completes the proof in this case. 

\medskip

\noindent \textbf{Case 2.} There is a part $V_i$ with $k_i \geq \delta'k$.  Applying Theorem~\ref{main1} to $G$, we obtain $K_{k}$ as a weak immersion in $G$.  By the proof of Lemma~\ref{asympt}, the number of crossings between the edges used in this weak immersion is at least 
\begin{equation}\label{jaj}
  \cn(K_{k})-k^3 /2.  
\end{equation}
 Furthermore, the proof of Theorem \ref{main1} shows that no matter how we partition part $V_i$ into $V_i=U_i \cup W_i$ with $|U_i|=k_i$, the edges in $G[W_i]$ are not used in the weak immersion. Note that $G[V_i]$ has minimum degree at least $k_i-1$, and hence, has at least $n_i(k_i-1)/2$ edges.  By Lemma \ref{numberofedgeschange}, we can pick this partition $V_i = W_i\cup U_i$ so that the number of edges in $G[W_i]$ is at least \[m_i:=\frac{1}{2}n_i(k_i-1){n_i-k_i \choose 2}/{n_i \choose 2}= \frac{1}{2}(k_i-1)(n_i-k_i)(n_i-k_i-1)/(n_i-1).\]
As all three numbers $k$, $k_i\ge \delta'k$, and $n_i-k_i \geq k_i-1$ are sufficiently large, we have $m_i \geq 6.95(n_i-k_i)$. Thus, we can apply Lemma \ref{ackcr} to obtain that \[cr(G[W_i]) \geq \frac{1}{27.48}\frac{m_i^3}{(n_i-k_i)^2} \geq 2^{-11}k_i^3(n_i-k_i).\]

\noindent We obtain that 
\begin{eqnarray*}\nonumber\cn(G) & \geq &\cn(K_{k})-k^3 /2+\cn(G[W_i])\\ &\geq & \cn(K_{k})-k^3 /2+2^{-11}k_i^3(n_i-k_i) \\
 &\geq & \cn(K_k)+c(n-k)k^3.\end{eqnarray*}
In the last inequality, we used that $k$ is sufficiently large, $k_i \geq \delta' k$, $n_i-k_i \geq k_i-1$, $n \leq (1.64 - \varepsilon)k$, and $c = 2^{-60}.$ This completes the proof in Case 2. 
\smallskip

\medskip

\noindent \textbf{Case 3.} Each part $V_i$ is either a singleton or satisfies $n_i > \delta k$ and $k_i < \delta'k$.  In this case, as $\ell=n-k>0$, we must have at least one part that is not a singleton. Recall that $n-k<0.64k$ and $n-k=\sum_i (n_i-k_i)$. For every $i$ for which $V_i$ is not a singleton, we have $n_i-k_i > \delta k - \delta'k = (\delta -\delta')k$. Thus, the number of parts that are not singletons is smaller than $0.64k/(\delta -\delta')k<8$, which implies that there are at most \emph{seven} non-singleton parts. 

Let $A$ be the union of the singleton parts and $B= V(G) \setminus A\neq\emptyset$. Since $B$ is the union of non-singleton parts $V_i$, each of which is larger than $\delta k$, we have $|B| > \delta k$. The chromatic number of $G$ is $k$. The chromatic number of $G[B]$, the subgraph of $G$ induced by $B$, is smaller than $7\delta'k$. Using that $A \cup B$ is a vertex partition of $G$, we obtain that the chromatic number of $G[A]$ is larger than $k-7\delta'k$. As $G[A]$ is a clique, we have $|A| > k-7\delta'k=(1-7\delta')k$.

It follows by averaging over all cliques of size $k$ in $K_{k+1}$, just like in (\ref{12xyz}),  that $\cn(K_k)/{k \choose 4}$ is a monotonically increasing function. Since it is bounded from above, it must converge. As $|A| > (1-7\delta')k$, we obtain that \[\cn(K_{|A|}) \geq \frac{\cn(K_k){|A| \choose 4}}{{k \choose 4}} \geq \left(\frac{|A|}{k}\right)^4\cn(K_k) \geq \left(1+4\left(\frac{|A|}{k}-1\right)\right)\cn(K_k) \geq   (1-28\delta')\cn(K_k),\] provided that $k$ is sufficiently large. 

Notice that the clique $G[A]$ and the complete bipartite graph between $A$ and $B$ are disjoint subgraphs of $G$. Therefore, we get 
\begin{eqnarray*}
\cn(G) & \geq & \cn(K_{|A|})+\cn(K_{|A|,|B|}) \geq (1-28\delta')\cn(K_k)+.9118|A|^2|B|^2/16 \\ & \geq & \cn(K_k)+\left(.9118(1-14\delta')\delta^2/16-28\delta'/64\right)k^4 \\
&\geq &\cn(K_k)+2^{-13}k^4 > \cn(K_k)+c(n-k)k^3.
\end{eqnarray*}
Here the second inequality follows by substituting in the bound from 
Lemma \ref{eq1234abc} on the crossing number of complete bipartite graphs and using the bound $\cn(K_k)\le k^4/64.$ The last inequality holds with $c=2^{-60}$, say, because $n-k<.64k.$  
\end{proof}

  \bigskip\noindent{\bf Acknowledgements.}
This material is based upon work supported by the National Science Foundation under Grant No. DMS-1928930, while the authors were in residence at the Simons Laufer Mathematical Sciences Institute in Berkeley, California, during the 2025 Extremal Combinatorics Program.  Parts of the solution to this optimization problem in Section \ref{betterchoices} were found with help from David Fox and Chat GPT-5. We would also like to thank Ji Zeng and Daniel Cranston for helpful comments related to an early draft of the paper.

\end{document}